\documentclass[a4paper,reqno,10pt]{amsart}
\usepackage[utf8]{inputenc}
\usepackage{amsfonts}
\usepackage{amsmath}
\usepackage{amsthm}
\usepackage{amssymb}
\usepackage{graphicx} 
\usepackage[dvipsnames]{xcolor}
\usepackage{ytableau}
\usepackage{tikz-cd}
\usepackage{tikz}
\tikzstyle{rectan} = [rectangle, draw, rounded corners, minimum width=2.5cm, minimum height=0.8cm, text centered]
\usetikzlibrary{backgrounds}
\usetikzlibrary{decorations.pathreplacing,}

\usepackage[linktocpage,bookmarksopen=true]{hyperref}

\usepackage{cleveref}

\newtheoremstyle{teoremas}
{12pt}
{13pt}
{\itshape}
{}
{\bfseries}
{}
{.5em}
{}

\theoremstyle{teoremas}
\newtheorem{theorem}{Theorem}[section]
\newtheorem{thm}[theorem]{Theorem}
\newtheorem{corollary}[theorem]{Corollary}
\newtheorem{cor}[theorem]{Corollary}
\newtheorem{lemma}[theorem]{Lemma}
\newtheorem{proposition}[theorem]{Proposition}

\numberwithin{equation}{section}

\newtheoremstyle{definition}
{12pt}
{12pt}
{}
{}
{\bfseries}
{}
{.5em}
{}

\theoremstyle{definition}
\newtheorem{definition}[theorem]{Definition}

\newtheorem{conjecture}[theorem]{Conjecture}

\newtheorem{example}[theorem]{Example}

\newtheorem{remark}[theorem]{Remark}

\newcommand{\M}{\mathsf{M}}

\newcommand{\PP}{\operatorname{PP}}

\usepackage{tikz}
\usetikzlibrary{automata, arrows}
\usepackage[scr=boondoxo]{mathalfa}

\usepackage[shortlabels]{enumitem} 

\hypersetup{
    colorlinks = true,
    linkbordercolor = {white},
    linkcolor = {NavyBlue},
    anchorcolor = {black},
    citecolor = {NavyBlue},
    filecolor = {cyan},
    menucolor = {NavyBlue},
    runcolor = {cyan},
    urlcolor = {NavyBlue}
}

\usepackage[margin=1.30in]{geometry} 
\DeclareMathOperator{\sh}{sh}

\newcommand{\ehr}{\operatorname{ehr}}

\newcommand{\hp}{\mathsf{hp}}

\usepackage{graphicx} 

\usepackage[colorinlistoftodos]{todonotes}

\title[Ehrhart positivity for lattice path matroids]{Ehrhart positivity for lattice path matroids}

\author{Luis Ferroni, Alejandro H. Morales, Greta Panova}

\address{(L. Ferroni)
 University of Pisa, Pisa, Italy
}
\email{luis.ferroni@unipi.it}
\address{(A. H. Morales)
Universit\'e du Qu\'ebec \`a Montr\'eal, Montr\'eal, Canada}
\email{morales\_borrero.alejandro@uqam.ca}

\address{(G. Panova)
 University of Southern California, Los Angeles (CA), United States
}
\email{gpanova@usc.edu}

\subjclass[2020]{05A17, 05B35, 52B20, 52B40}

\thanks{}

\begin{document}
\allowdisplaybreaks

\begin{abstract}
    We prove that all lattice path matroids are Ehrhart positive. This unifies and generalizes numerous results on the Ehrhart positivity of matroids developed over the last two decades. We rely on our previous work on the positivity of order polynomials of fences. Our main result supports the conjecture by Ferroni, Jochemko, and Schr\"oter (2022) on the Ehrhart positivity of positroids. Furthermore, our main result implies that all Schubert matroids are Ehrhart positive, which thus settles a conjecture by Fan and Li (2024), and shows Newton polytopes of certain Schubert and key polynomials are Ehrhart positive. 
\end{abstract}

\keywords{Order polynomials, Ehrhart polynomials, Lattice path matroids, Skew plane partitions, Fence posets.}

\maketitle


\section{Introduction}

\subsection{Overview}

A classical theorem of Ehrhart \cite{ehrhart} states that, for every lattice polytope
$P\subseteq \mathbb{R}^n$ of dimension $d$, the counting function
\[
m \longmapsto |mP\cap \mathbb{Z}^n|,
\qquad m\in \mathbb{Z}_{\geq 0},
\]
is given by a polynomial of degree $d$. This polynomial, denoted by
$\ehr_P(t)\in \mathbb{Q}[t]$, is called the \emph{Ehrhart polynomial} of $P$.
It encodes fundamental geometric information about the polytope. For instance,
when $P$ is full-dimensional, the leading coefficient of $\ehr_P(t)$ is the
Euclidean volume of $P$ up to the usual normalization factor, while the
second-highest coefficient records the relative volume of the boundary of $P$.
We refer to the book by Beck and Robins \cite{beck-robins} for a modern treatment of Ehrhart theory.

When the polytope $P$ arises from a combinatorial object, one hopes for more:
ideally, the coefficients of $\ehr_P(t)$ should admit a combinatorial
interpretation. A natural first step towards such an interpretation is to ask
whether these coefficients are nonnegative. If this holds, we say that $P$ is
\emph{Ehrhart positive}. This positivity property is subtle. It fails in
general, and even for highly structured families of lattice polytopes it can be
difficult to establish. We refer to \cite{liu,ferroni-higashitani} for surveys
on Ehrhart positivity. There is also a geometric reason why Ehrhart positivity is a natural property to
seek. The Ehrhart polynomial is a valuation on polytopes, meaning that it
interacts with polyhedral subdivisions through inclusion--exclusion. Moreover,
McMullen \cite{mcmullen} showed that the coefficient of degree $i$ of the
Ehrhart polynomial can be expressed as a weighted sum of the volumes of the
codimension-$i$ faces of $P$, where the weights are determined, non-canonically,
by the corresponding normal cones. From this perspective, positivity of
Ehrhart coefficients can be viewed as a shadow of local geometric positivity
phenomena. 

The present paper concerns Ehrhart positivity for matroid base polytopes. These polytopes form an important and well-studied class of
generalized permutohedra, and their Ehrhart polynomials have attracted
considerable attention. A conjecture of De Loera, Haws, and K\"oppe
\cite{deloera-haws-koppe} predicted that every matroid base polytope is
Ehrhart positive. This conjecture inspired substantial work over nearly two
decades, leading to many partial positive results; see
\cite{postnikov,castillo-liu,castillo-liu2,ferroni_hypersimplex,
ferroni_hooks,jochemko-ravichandran,ferroni-jochemko-schroter,hanely,
fan-li,mcginnis,deligeorgaki-mcginnis-vindas}. Although the conjecture was
ultimately disproved by Ferroni \cite{ferroni}, the persistence of Ehrhart
positivity in many important classes of matroids remains a compelling and
largely unexplained phenomenon.

This question fits naturally into the broader wave of positivity results in
matroid theory that has transformed algebraic combinatorics over the past
decade. Breakthroughs in the so-called Hodge theory of matroids, surveyed for
instance in \cite{kalai,ardila,eur}, have revealed deep geometric structures
behind several positivity phenomena. 

A particularly promising class is that of positroids, a family of matroids associated to the cells of the totally positive Grassmannian introduced by Postnikov; see \cite{Postnikov_positroids}. This class encompasses all
previously known positive instances of the De Loera--Haws--K\"oppe conjecture.
Motivated by this evidence, Ferroni, Jochemko, and Schr\"oter \cite[Conjecture~6.1]{ferroni-jochemko-schroter} formulated the
following conjecture.

\begin{conjecture}[\cite{ferroni-jochemko-schroter}]\label{conj:positroids}
    Positroids are Ehrhart positive.
\end{conjecture}

Although this conjecture is stated for positroids, it is widely expected that a
similar positivity phenomenon should hold for the broader class of
\emph{polypositroids} introduced by Lam and Postnikov; see \cite{lam-postnikov}.

In our previous paper \cite{ferroni-morales-panova}, we studied a family of
matroids called \emph{snake matroids}. These matroids are, up to unimodular
equivalence, positroids, and they satisfy a stronger form of
Ehrhart positivity: their Ehrhart polynomials expand positively in the basis $\{(t-1)^m\}_{m \geq 0}$. On the other hand, it is known that the Ehrhart polynomial of an
arbitrary matroid can be expressed as a signed combination of Ehrhart
polynomials of snake matroids (see \cite[Appendix~A]{ferroni-schroter}). This makes snakes a useful testing ground for
understanding how far one might hope to push positivity from special positroids
towards the entire class.

There are two natural intermediate classes between snake matroids and
positroids: series-parallel matroids and lattice path matroids. These inclusions
are depicted in Figure~\ref{fig:hierarchy}. In fact, snake matroids are exactly
the intersection of the classes of series-parallel matroids and lattice path
matroids.

\begin{figure}[ht]
    \centering
    \scalebox{0.9}{
    \begin{tikzpicture}
        \tikzset{node distance = 2.0cm and 1cm}
        \tikzstyle{arrow} = [-{>[scale=1.8, length=2, width=3.5]}, double]
        \node (snakes)     [rectan]                                    {Snake matroid};
        \node (sp)     [rectan, right=of snakes, yshift= 1.0cm] {Series-parallel matroid};
        \node (LPM)[rectan, right=of snakes, yshift=-1.0cm] {Lattice path matroid};
        \node (pos)        [rectan, right=of sp, yshift=-1.0cm] {Positroid};
        \draw[arrow] (snakes)      |- (sp);
        \draw[arrow] (snakes)      |- (LPM);
        \draw[arrow] (LPM) -| (pos);
        \draw[arrow] (sp)      -| (pos);
    \end{tikzpicture}}
    \caption{From snakes to positroids.}
    \label{fig:hierarchy}
\end{figure}

The main result of this paper establishes Ehrhart positivity for one of these
two intermediate classes, while the other remains open (see \cite[Problem~5.7]{ferroni-morales-panova}).

\begin{theorem} \label{thm:main}
    Lattice path matroids are Ehrhart positive.
\end{theorem}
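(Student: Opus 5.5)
Our plan is to reduce Theorem~\ref{thm:main} to the positivity of order polynomials of fences established in \cite{ferroni-morales-panova}. A lattice path matroid $M[U,L]$ is determined by two lattice paths $U$ and $L$ bounding a skew region, and its bases are the lattice paths in that region. First we will handle direct sums: the Ehrhart polynomial of a direct sum is the product of the Ehrhart polynomials of the summands, and positivity is preserved under products. Hence we may assume that $M[U,L]$ is connected, i.e.\ that the region is a connected skew shape $\lambda/\mu$ whose boundary paths meet only at their endpoints.

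The second step is to identify lattice points. For a lattice path matroid, the integer points of $t\,P_M$ are in bijection with $t$-tuples of nested lattice paths $L\le \gamma_1\le \cdots \le \gamma_t\le U$. Equivalently, they are plane partitions of the skew shape $\lambda/\mu$ with entries bounded by $t$, i.e.\ order-preserving maps from the poset of cells of $\lambda/\mu$ (or a closely related poset) into a chain. We would verify this using the standard fact that lattice path matroids are positroids/transversal with a distributive-lattice structure on bases. Then $\ehr_{P_M}(t)$ becomes an order polynomial $\Omega_{P}(t+1)$ or a shifted variant of it, for a poset $P$ attached to the skew shape.

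The third step is a decomposition into fences. When $\lambda/\mu$ is a ribbon (border strip), $P$ is a fence and $M$ is a snake matroid, where the earlier result gives positivity in the basis $\{(t-1)^m\}$. For a general skew shape, we would peel off the outer ribbon. Counting skew plane partitions via a transfer/Lindström--Gessel--Viennot type identity, or via a recursion on the innermost ribbon, would express $\ehr$ as a sum over ribbons. Each summand would be a product of fence-type order polynomials evaluated at shifted arguments such as $t-1, t-2,\dots$, times manifestly positive factors. An alternative is a Jacobi--Trudi-style determinant, but its signs are unpleasant, so we prefer a subtraction-free recursion. A cleaner option may be to show that the relevant quantity expands positively in a basis like $\binom{t+k}{d}$ with $k\ge 0$ for the whole structure, which immediately gives positivity in powers of $t$.

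The main obstacle is the third step. Positivity of $\Omega_{\text{fence}}(t)$ in powers of $(t-1)$ does not automatically survive the shifts to $t-j$ needed for the inner layers. We would therefore try first to strengthen the fence result to positivity in $\{(t-j)^m\}$ for the appropriate $j$, or in a binomial basis compatible with the recursion. We would test this on two-row and hook-shaped skew regions, which recover the known cases of \cite{ferroni_hooks,ferroni_hypersimplex}, before attempting the general induction.
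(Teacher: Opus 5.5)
Your second step is false, and everything after it rests on it. Nested families $L\le\gamma_1\le\cdots\le\gamma_t\le U$ (equivalently, plane partitions of $\lambda/\mu$ with entries in $\{0,\dots,t\}$) do map onto the lattice points of $t\,\mathscr{P}(\M)$ via $(\gamma_i)\mapsto\sum_i e_{s(\gamma_i)}$. However, this map is not injective as soon as $\lambda/\mu$ contains a $2\times 2$ square. Take $\mathsf{U}_{2,4}$, i.e.\ $\lambda/\mu=(2,2)$. The point $(1,1,1,1)$ equals both $e_{s(\mathrm{NNEE})}+e_{s(\mathrm{EENN})}$ and $e_{s(\mathrm{NENE})}+e_{s(\mathrm{ENEN})}$. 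Accordingly, there are $20$ plane partitions of shape $(2,2)$ with entries in $\{0,1,2\}$, but only $19$ lattice points in $2\Delta_{2,4}$.

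No ``closely related poset'' repairs this. The polynomial $\Omega(\lambda/\mu;t+1)$ has degree $|\lambda/\mu|$, which exceeds $\dim\mathscr{P}(\M)=n-1$ unless $\lambda/\mu$ is a ribbon. Moreover, $\ehr(\mathscr{P}(\mathsf{U}_{2,4}),t)=\frac23t^3+2t^2+\frac73t+1$ is not $\Omega(P;t+c)$ for any poset $P$: its leading coefficient would require a $3$-element poset with $4$ linear extensions, and none exists. So the identity $\ehr(\mathscr{P}(\M),t)=\PP_\theta(t)$ is special to snakes. Your third step (peeling or LGV applied to skew plane partitions) would be establishing positivity of the wrong polynomial.

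The missing idea is a way to pass from snakes to an arbitrary lattice path matroid at the level of the polytope. The paper subdivides $\mathscr{P}(\M)$ into snake polytopes indexed by admissible Delannoy paths. This gives $\ehr(\mathscr{P}(\M),t)=\sum_{\delta\in\mathcal{D}(\lambda/\mu)}(-1)^{d(\delta)}\ehr(\mathscr{P}(\M(\delta)),t)$, with each term equal to $\PP$ of a possibly disconnected ribbon.

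The signs are then cancelled in two moves. First, each $\delta$ is written uniquely as a lattice path $\gamma\in\mathcal{L}(\lambda/\mu)$ together with the subset of its high peaks that were cut into diagonals. Second, inclusion--exclusion is run inside plane partitions of the single ribbon $\gamma$. For $\gamma\neq\gamma_{\min}$, the group of $\gamma$ counts plane partitions with entries in $\{0,\dots,t\}$ whose high peaks are $<t$. Splitting according to the order filter $F$ of cells equal to $t$ (necessarily inside $\gamma\setminus\langle\hp(\gamma)\rangle$) yields $\sum_F\Omega(\gamma\setminus F;t)$. The path $\gamma_{\min}$ contributes $\Omega(\gamma_{\min};t+1)$.

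Only a shift by one occurs, and nonnegativity of the coefficients of $\Omega(\theta;t)$ for fences is exactly the earlier theorem. So the obstacle you anticipated (positivity after shifts to $t-j$ with $j\ge 2$) never arises.
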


The class of lattice path matroids was first studied by Bonin, De Mier, and Noy \cite{bonin-demier-noy}. Their Ehrhart theory is featured extensively in work of Bidkhori \cite{bidkhori}, Knauer, Mart\'inez-Sandoval, and Ram\'irez-Alfons\'in \cite{knauer-martinez-ramirez},  Benedetti, Knauer,  Valencia \cite{benedetti-knauer-valencia}, and Sanchez \cite{sanchez_MV}.

This result gives new evidence for the conjectural Ehrhart positivity of
positroids. It also has several immediate consequences. Since Schubert matroids
are (up to an isomorphism) a special class of lattice path matroids, our theorem implies that all Schubert matroids are
Ehrhart positive, thereby settling a conjecture of Fan and Li
\cite[Conjecture~1.6]{fan-li}. It also generalizes the Ehrhart positivity of
panhandle matroids recently established by Deligeorgaki, McGinnis, and
Vindas-Mel\'endez \cite[Theorem~1.3]{deligeorgaki-mcginnis-vindas}.
Furthermore, our result recovers the Ehrhart positivity of hypersimplices and
minimal matroids treated by Ferroni in \cite{ferroni} and \cite{ferroni_hooks},
respectively, the class of matroids covered by McGinnis in \cite{mcginnis}, the case of Catalan matroids addressed by Chen, Li, and Yao
\cite{chen-li-yao}, and the snake matroids studied in
\cite{ferroni-morales-panova}.

Another relevant problem in Ehrhart positivity was posed by Monical, Tokcan, and Yong \cite[Conjecture~5.19]{monical-tokcan-yong}. These authors conjectured that a class of polytopes called \emph{Schubitopes}, which are Newton polytopes of Schubert polynomials, are Ehrhart positive.  While Schubitopes are a subclass of generalized permutohedra, they are rarely matroids.  They are Minkowski sums of Schubert matroids \cite[Thm. 7]{fink-meszaros-stdizier}.  Theorem~\ref{thm:main} shows the conjecture is true when the Schubitope is one Schubert matroid. However, recently, Li and St. Dizier \cite{LiStDizierCounterSchubi} found a counterexample to this conjecture that is the Minkowski sum of two hypersimplices. 

\subsection*{Acknowledgments}

The authors thank Oberwolfach and the organizers of the Enumerative Combinatorics workshop in January 2026, where part of this work was carried out. The authors also acknowledge many fruitful conversations about fence posets and Ehrhart theory with the participants of the American Institute of Mathematics workshop on Ehrhart polynomials held in May 2022. We also thank Dania Morales, whose thesis was an inspiring source of ideas that ultimately led to this project to succeed, and for her encouragement during the 2026 Joint Math Meetings, and David Speyer for very useful conversations about series-parallel subdivisions in matroid theory, Jonathan Boretsky, Yakob Kahane, and Sophie Rehberg for helpful comments and suggestions. AHM is partially supported by an  NSERC Discovery grant RGPIN-2024-06246. GP is partially supported by an NSF grant CCF-2302174.

\section{Background}

We will assume that the reader is familiar with the basic terminology on matroid polyhedra, subdivisions, and valuations along the lines of \cite[Sections~2.3 and 2.4]{ferroni-schroter}

\subsection{Schubert matroids and lattice path matroids}

We now recall the basic definitions and properties of lattice path matroids. Fix two non-negative integers $k \leq n$. We consider lattice paths in the plane $\mathbb{R}^2$ starting at $(0,0)$, ending at $(n-k,k)$, and consisting of exactly $n$ steps, each of which is either $+(1,0)$ or $+(0,1)$. For instance, Figure~\ref{fig:lattice-path} depicts two such lattice paths for the parameters $k=5$ and $n=10$.

\begin{figure}[ht]
    \centering

        
        \includegraphics[scale=0.7]{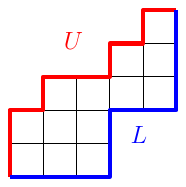}
        
    \caption{Two lattice paths for $k=5$ and $n=10$. The blue path stays below the red path.}
    \label{fig:lattice-path}
\end{figure}

It is convenient to encode such a lattice path as a word of length $n$ in the alphabet $\{\text{N},\text{E}\}$, where the $i$-th letter is N if the $i$-th step is $+(0,1)$, and E if the $i$-th step is $+(1,0)$. For example, the red path in Figure~\ref{fig:lattice-path} is represented by the word
\(
U=\text{NNENEENENE}.
\)
Equivalently, one may record the subset $s(U)\subseteq [n]$ consisting of the positions at which the letter N occurs. Thus, for the red path $U$ in Figure~\ref{fig:lattice-path}, we have
\[
s(U)=\{1,2,4,7,9\}.
\]

Now fix $k$ and $n$, and let $L$ and $U$ be two lattice paths with these parameters. We say that $L$ \emph{lies below} $U$ if, for every $m=1,\ldots,n$, one has
\begin{equation}\label{eq:ineq-lattice-paths}
    \left|s(L) \cap [m]\right| \leq \left| s(U) \cap [m]\right|\,.
\end{equation}
This condition has the expected geometric meaning in the rectangular grid: the path $L$ never rises above the path $U$. This is illustrated in Figure~\ref{fig:lattice-path}, where the blue path lies below the red path. In that example,
\[
s(L)=\{4,5,8,9,10\}
\qquad\text{and}\qquad
s(U)=\{1,2,4,7,9\}.
\]

Whenever $L$ lies below $U$, we also say that $U$ lies above $L$. If, in addition, the inequality in \eqref{eq:ineq-lattice-paths} is strict for every $m=1,\ldots,n-1$, then we say that $L$ lies \emph{strictly below} $U$, or equivalently that $U$ lies \emph{strictly above} $L$.

\begin{theorem}[\cite{bonin-demier}]\label{thm:lattice_path_matroid}
    Fix $0\leq k\leq n$, and let $L$ and $U$ be two lattice paths such that $L$ lies below $U$. Then
    \[
        \mathscr{B}[L,U]
        :=
        \{s(P): P\text{ is a lattice path lying above $L$ and below $U$}\}
    \]
    is the set of bases of a matroid on $[n]$ of rank $k$.
\end{theorem}

The matroid described in Theorem~\ref{thm:lattice_path_matroid} is denoted by $\M[L,U]$. We call it the lattice path matroid with \emph{lower path} $L$ and \emph{upper path} $U$. The foundations of the theory of lattice path matroids are developed in \cite{bonin-demier} and \cite{bonin-demier-structural}.

Schubert matroids arise from generic points in Schubert cells of the flag variety. They form, up to isomorphism, a subclass of lattice path matroids. The following statement gives the precise characterization of Schubert matroids within this framework. We use the notation $\M[U]:=\M[L,U]$ for lattice path matroids of rank $k$ on $n$ elements whose lower path is
\(
L=\text{\normalfont E\dots EN\dots N},
\)
namely the lower-right boundary of the rectangular grid with vertices $(0,0)$, $(0,k)$, $(n-k,0)$, and $(n-k,k)$.

\begin{theorem}\label{teo:schubert-is-lpm}
    A matroid $\M$ is a Schubert matroid if and only if it is isomorphic to a lattice path matroid of the form $\M[U]$.
\end{theorem}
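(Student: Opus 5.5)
We will prove both directions by passing through the standard description of Schubert matroids in terms of nested sets. Recall that a Schubert matroid of rank $k$ on a ground set $E$ with $|E|=n$ is specified by a complete flag $E_1\subsetneq E_2\subsetneq\cdots\subsetneq E_n=E$, with $|E_i|=i$, together with a $k$-subset $I=\{i_1<\cdots<i_k\}\subseteq[n]$. Its bases are the $k$-sets $B$ satisfying
\[
|B\cap E_{i_j}|\geq j \qquad\text{for } j=1,\ldots,k.
\]
Equivalently, these are the $k$-sets that are componentwise at least as large as $I$ in the Gale order after relabelling $E$ as $[n]$ along the flag. This is the matroid realized by a generic point of the Schubert cell indexed by $I$. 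Relabel so that $E_i=[i]$. The bases are then exactly the $k$-subsets $B\subseteq[n]$ with $B\geq I$ in Gale order, i.e.\ whose $j$-th smallest element is at least $i_j$.

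Next we translate this Gale condition into lattice paths. Let $U$ be the path with $s(U)=I$. A $k$-set $B=s(P)$ satisfies $b_j\geq i_j$ for all $j$ if and only if $|B\cap[m]|\leq|I\cap[m]|$ for every $m$, that is, if and only if $P$ lies below $U$ in the sense of \eqref{eq:ineq-lattice-paths}. This equivalence is the only computation in the proof. For the forward implication, if $b_j\geq i_j$ for all $j$, then the number of $b_j\leq m$ is at most the number of $i_j\leq m$. For the converse, if $b_j<i_j$ for some $j$, take $m=b_j$; then $|B\cap[m]|\geq j>|I\cap[m]|$.

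Every $k$-path automatically lies above $L=\mathrm{E}\cdots\mathrm{E}\mathrm{N}\cdots\mathrm{N}$, since $s(L)=\{n-k+1,\dots,n\}$ gives $|s(L)\cap[m]|=\max(0,m-n+k)\leq |B\cap[m]|$ for every $k$-set $B$. Hence the bases of the Schubert matroid are exactly $\mathscr{B}[L,U]$, which shows that a Schubert matroid is isomorphic to $\M[U]$. Conversely, given any $U$, set $I=s(U)$ and take the standard flag $[1]\subsetneq[2]\subsetneq\cdots\subsetneq[n]$. The same argument shows that $\M[U]$ equals the Schubert matroid with these data. Since both classes are closed under isomorphism, the theorem follows.

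The main obstacle is not technical. It is fixing which of the several equivalent definitions of Schubert matroid is in force, and making the conventions agree: the direction of the Gale order, and whether the flag is read from $1$ or from $n$. With the opposite convention one obtains the reversed paths, namely $\M[L',U']$ with $U'$ the top-left boundary. That matroid is isomorphic to a matroid $\M[U]$ via the relabelling $i\mapsto n+1-i$. Composing with the matroid duality that swaps N and E is not needed, but the reversal step should be stated explicitly.
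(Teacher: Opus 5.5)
There is a concrete slip in the step you treat as definitional. With $E_i=[i]$, the condition $|B\cap[i_j]|\ge j$ says that at least $j$ elements of $B$ are $\le i_j$, i.e.\ $b_j\le i_j$, not $b_j\ge i_j$.

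So your stated definition gives a Gale \emph{down}-set, equivalently $|B\cap[m]|\ge|I\cap[m]|$ for all $m$. Its bases are $\mathscr{B}[L',U']$ with $s(L')=I$ and $U'=\mathrm{N}\cdots\mathrm{N}\mathrm{E}\cdots\mathrm{E}$, not $\mathscr{B}[L,U]$ with $s(U)=I$. This fails even up to isomorphism. For $n=2$, $k=1$, $I=\{1\}$, your definition gives the single basis $\{1\}$ (a coloop plus a loop), whereas $\M[U]$ with $s(U)=\{1\}$ is $\mathsf{U}_{1,2}$. The same example refutes your converse as written ($I=s(U)$ with the standard flag).

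The repair is exactly the reversal in your last paragraph. It must be built into both directions rather than offered as the ``opposite convention.'' With $\rho(i)=n+1-i$ one has $|\rho(B)\cap[m]|=k-|B\cap[n-m]|$. Hence $|B\cap[m]|\ge|I\cap[m]|$ for all $m$ if and only if $|\rho(B)\cap[m]|\le|\rho(I)\cap[m]|$ for all $m$. So your Schubert matroid is isomorphic to $\M[U]$ with $s(U)=\rho(I)$. Conversely, $\M[U]$ equals the Schubert matroid with $I=\rho(s(U))$ and flag $E_i=\{n-i+1,\dots,n\}$. Alternatively, define the Schubert matroid by reduced-row-echelon pivots in $I$; its generic point has bases $\{B: b_j\ge i_j\}$, and your Gale-to-path computation then applies verbatim.

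With that fixed, your argument is correct but takes a different route from the paper. The paper gives no argument of its own: it defers to Corollary~4.4 of Bonin and de Mier's structural paper on lattice path matroids. That result comes from a theory in which the matroids $\M[U]$ (generalized Catalan matroids) are recognized intrinsically, as the nested matroids whose cyclic flats form a chain. Under that description the various guises of Schubert matroids (freedom, shifted, nested, generic points of Schubert cells) are known to coincide. Your computation is elementary and self-contained, but it proves the theorem only for the particular flag/Gale definition you fix. The citation covers whichever equivalent definition the reader has in mind.
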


\begin{proof}
    This is precisely \cite[Corollary~4.4]{bonin-demier-structural}.
\end{proof}

\subsection{Subdivisions of lattice path matroids into snakes}

In \cite{luis_delannoy} Ferroni described how Schubert matroids can be subdivided into snake matroids. The cells in the subdivision described by Ferroni are described via a notion of \emph{admissible Delannoy path}. We introduce a slight generalization that covers all lattice path matroids rather than only the Schubert matroids. 

\begin{definition}\label{def:adm_path}
    Let $\M = \M[L,U]$ be a lattice path matroid on $[n]$ of rank $r$. The \emph{admissible Delannoy paths} associated to $\M$ are all the paths starting at $(1,1)$ and ending at $(n-r,r)$, having steps of the form $+(1,0)$, $+(0,1)$ and $+(1,1)$, and satisfying the following requirements:
    \begin{enumerate}[(i)]
        \item The paths stay within the lattice path representation of $\M$, i.e., they do not go above $U$ nor below $L$.
        \item An intermediate step of the form $+(0,1)$ is valid only if it does not yield a vertical overlap with the upper path $U$.
        \item An intermediate step of the form $+(1,0)$ is valid only if it does not yield a horizontal overlap with the lower path $L$.
        \item An intermediate step of the form $+(1,1)$ is valid only when both the steps $+(0,1)$ and $+(1,0)$ are valid.
    \end{enumerate}
\end{definition}

\begin{remark}\label{rem:delannoy_shift}
    The above definition is equivalent to the following interpretation. Shift the Delannoy path by $(-1/2,-1/2)$, so that the grid steps are the centers of the squares of shape between $L$ and $U$, with initial point at $(1/2,1/2)$ and ending point $(n-r-1/2,r-1/2)$. The conditions above are exactly equivalent to having the shifted path be entirely contained inside the shape and not touching the boundaries. 
\end{remark}

Observe that when $\M$ is a Schubert matroid, i.e., when $L$ is the lower-right border of the grid, the conditions on the overlap are superfluous for $L$, and therefore the above definition matches \cite[Definition~3.1]{luis_delannoy}.

\begin{example}
    Consider the lattice path matroid given by $U=\{1,2,4,7,9\}$ and $L=\{4,5,8,9,10\}$. From left to right, we have in Figure~\ref{fig:delannoy-non-examples} three Delannoy lattice paths that are \emph{not} admissible, followed by two admissible examples. The first path has a vertical overlap with $U$ at the fourth step, and thus violates condition (ii) of the definition. The second and the third path make a diagonal step in a forbidden position: in the second diagram, the diagonal occurs when doing a horizontal step overlaps $L$ horizontally, whereas in the third diagram there is a diagonal step where a vertical step would yield a vertical overlap with $U$. The two remaining examples are admissible: notice that in the fourth of the five paths we do have an overlap with $U$ but it is not vertical. 

    \begin{center}
    \begin{figure}[ht]
    \includegraphics[scale=0.7]{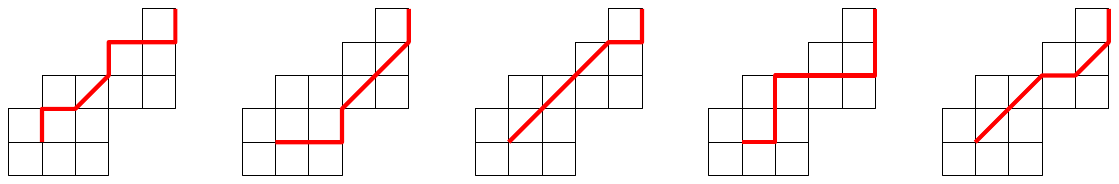}

    \caption{From left to right, three non-examples followed by two examples.}\label{fig:delannoy-non-examples}
    \end{figure}
\end{center}
\end{example}

\subsection{Skew shapes, lattice path matroids, and Ehrhart polynomials}

A convenient way of encoding a lattice path matroid is via a \emph{skew shape} denoted by $\lambda/\mu$ for partitions $\lambda$ and $\mu$. For example, the lattice path matroid in Figure~\ref{fig:lattice-path} corresponds to the skew shape $\lambda/\mu$ for $\lambda = (5,5,5,3,3)$ and $\mu = (4,3,1)$. 
A \emph{ribbon} or \emph{border strip} is a (not necessarily connected) skew shape whose Young diagram does not contain a $2\times 2$ square. A lattice path matroid associated to a \emph{connected} ribbon shape is called a \emph{snake matroid}. A \emph{kitty corner} of a skew shape is a corner that is the intersection of two cells in the shape.

The following result follows directly from \cite[Theorem~3.3]{luis_delannoy}. To avoid unnecessary technicalities, we restrict ourselves only to connected matroids. 

\begin{theorem}\label{thm:bijection-delannoy-snakes}\cite[Theorem~3.3]{luis_delannoy}
    Let $\M$ be a connected lattice path matroid of rank $r$ on $[n]$. There exists a subdivision $\mathcal{S}$ of $\mathscr{P}(\M)$ into base polytopes of snake matroids (of the same size and rank as $\M$). Moreover, there is a bijection $\varphi$ between the internal faces of $\mathcal{S}$ and the set of all admissible Delannoy paths associated to $\M$. This bijection has the following properties:
    \begin{enumerate}[\normalfont(i)]
        \item The facets of the subdivision $\mathcal{S}$ correspond to paths with no diagonal steps.
        \item More generally, for each matroid $\mathsf{N}$ corresponding to an internal face of $\mathcal{S}$, the number of connected components $c(\mathsf{N})$ of $\mathsf{N}$ satisfies that $c(\mathsf{N})-1$ is the number of diagonal steps of the Delannoy path associated to $\mathsf{N}$.
    \end{enumerate}
\end{theorem}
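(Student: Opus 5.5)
The statement to prove is Theorem~\ref{thm:bijection-delannoy-snakes}: a connected lattice path matroid $\M=\M[L,U]$ admits a subdivision into snake matroid polytopes, and its internal faces correspond to admissible Delannoy paths. Since the statement is attributed to \cite[Theorem~3.3]{luis_delannoy}, which treats Schubert matroids ($L=\text{E}\dots\text{EN}\dots\text{N}$), the plan is to repeat Ferroni's construction, now bounded below by an arbitrary $L$.

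\textbf{Step 1: the cells.} For each maximal Delannoy path $D$ with no diagonal steps satisfying Definition~\ref{def:adm_path}, take the ribbon $R_D$ of cells of $\lambda/\mu$ threaded by the shifted path of Remark~\ref{rem:delannoy_shift}. The remark guarantees that $R_D$ is a connected ribbon touching neither boundary improperly. So $R_D=\M[L_D,U_D]$ with $L\le L_D\le U_D\le U$, a snake matroid of the same rank on $[n]$, and $\mathscr{P}(\M[L_D,U_D])\subseteq\mathscr{P}(\M)$.

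\textbf{Step 2: the cells form a subdivision.} I would use the standard iterative cut. Take the first kitty-free cell-corner where the shape has a $2\times2$ square and cut $\mathscr{P}(\M)$ by a hyperplane $x_1+\dots+x_m=c$, with $(m,c)$ read off from that corner. The two halves are $\mathscr{P}(\M[L,U'])$ and $\mathscr{P}(\M[L',U])$, where $U'$ and $L'$ are obtained by pushing $U$ down (respectively $L$ up) at the corner. The intersection of the halves is the direct sum of two smaller lattice path matroids, whose shapes meet at a kitty corner.

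Inducting on the number of cells of $\lambda/\mu$ gives a regular subdivision whose maximal cells are exactly the snakes of Step~1. The main obstacle is checking that these hyperplane splits are compatible, i.e.\ that the induced subdivisions on the pieces agree. I would handle this by noting that all of the hyperplanes $x_{[m]}=c$ are split hyperplanes of $\mathscr{P}(\M)$ in the sense of Joswig--Herrmann. They also pairwise do not cross in the interior, since distinct corners give compatible flats. Hence they define a common refinement.

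\textbf{Step 3: the face poset.} Interior faces are intersections of adjacent cells. Each such intersection is again a lattice path matroid whose shape is a chain of ribbons glued at kitty corners, i.e.\ a direct sum of snakes. Encode each kitty corner as a diagonal step: the two cells sharing it are then skipped by a $+(1,1)$ move. The validity rule in Definition~\ref{def:adm_path}(iv) is exactly the requirement that both adjacent cells exist inside the shape. This gives the bijection $\varphi$.

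Property~(i) then holds by construction. For property~(ii), a direct sum of $j+1$ connected snakes has $j$ kitty-corner gluings, so $c(\mathsf N)-1$ equals the number of diagonal steps. Connectedness of each piece follows because connected ribbons give connected lattice path matroids (Bonin--de Mier).
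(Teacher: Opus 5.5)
There is a genuine gap in Step~2, at exactly the point you identify as the main obstacle. Write $\ell_m=|s(L)\cap[m]|$ and $u_m=|s(U)\cap[m]|$. The hyperplanes $x_1+\cdots+x_m=c$ with $\ell_m<c<u_m$ do \emph{not} pairwise avoid each other in the interior of $\mathscr{P}(\M)$.

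Take $\mathsf{U}_{3,6}$ (shape $333$). The hyperplanes $x_1+x_2=1$ and $x_1+x_2+x_3+x_4=2$ both pass through the interior point $(\tfrac12,\dots,\tfrac12)$. They correspond to the lattice points $(1,1)$ and $(2,2)$, and an admissible Delannoy path takes diagonal steps through both of them. More generally, two such hyperplanes meet in the interior whenever some admissible Delannoy path has diagonal steps through both corresponding lattice points.

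If your splits really were pairwise compatible, the subdivision would have no interior faces of codimension $\ge 2$. That contradicts property~(ii) itself, for instance the path with $d(\gamma)=2$ in the running example $433/1$. So your justification for gluing the inductively built subdivisions is false. Step~3 has the same blind spot: interior faces are not only intersections of two adjacent cells. A face with $j$ diagonal steps is the intersection of the $2^j$ maximal cells around a point where $j$ of these hyperplanes cross.

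The statement is still true, but the argument should not go through split compatibility.
\begin{enumerate}[(1)]
\item Take $\mathcal{S}$ to be the restriction to $\mathscr{P}(\M)$ of the whole arrangement $\{x_1+\cdots+x_m=c:\ 1\le m\le n-1,\ c\in\mathbb{Z}\}$. Crossing hyperplanes still cut a polytope into a polyhedral complex, so nothing has to be glued.
\item What must be shown is that no cell acquires new vertices. Use $\mathscr{P}(\M[L,U])=\{x\in[0,1]^n:\ \sum_i x_i=r,\ \ell_m\le x_1+\cdots+x_m\le u_m\}$. Each maximal cell is then $\{c_m\le x_1+\cdots+x_m\le c_m+1\}\cap\mathscr{P}(\M)$.
\item Such a cell is cut out by coordinate bounds and prefix-sum bounds with integer right-hand sides. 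All constraint rows are intervals of consecutive ones, so the system is totally unimodular. Hence each cell is again a lattice path matroid polytope, and it is a snake because its band has width one.
\item Interior faces arise by imposing $x_1+\cdots+x_m=c$ for $m$ in a set $J$ with $\ell_m<c<u_m$. This strict inequality says exactly that the four cells around the lattice point $(m-c,c)$ lie in $\lambda/\mu$, which is condition~(iv). The resulting face is a direct sum of $|J|+1$ snakes, which gives (i) and (ii).
\end{enumerate}

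Your iterated-split induction can also be repaired. Strengthen the inductive claim to \emph{the subdivision is the one induced by this arrangement}; then both halves induce the same subdivision on their common facet. For comparison, the paper gives no argument of its own here and simply imports the statement from \cite{luis_delannoy} and the related literature.
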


The subdivision of a lattice path matroid into snakes was studied in detail by Bidkhori in her PhD Thesis \cite{bidkhori}. For a streamlined presentation, we suggest \cite[Appendix~A]{ferroni-schroter} and, in the context of Ehrhart polynomials, Dania Morales's PhD Thesis \cite[Theorem~4.2.5]{dania}. What results from the above theorem is that the admissible Delannoy paths for a lattice path matroid $\M$ associated with a skew shape $\lambda/\mu$ correspond exactly to the snakes that appear in the aforementioned subdivision of $\M$ into snakes. Moreover, from the bijection it is straightforward to see that if we translate the admissible Delannoy paths by the vector $(-1/2, -1/2)$ as in Remark~\ref{rem:delannoy_shift} (so that they start at $(1/2, 1/2)$ and end at $(n-r-1/2, r-1/2)$ rather than start at $(1,1)$ and end at $(n-r,r)$) we obtain the paths in $D(\lambda/\mu)$. For a skew shape $\lambda/\mu$, let us denote by $\mathcal{D}(\lambda/\mu)$ the set of all admissible Delannoy paths corresponding to the lattice path matroid associated to $\lambda/\mu$, but translated by $(-1/2,-1/2)$.  See Figure~\ref{fig:snakes-and-delannoy} for examples of these paths. 

\begin{figure}[ht]
\centering
\includegraphics[scale=1.3]{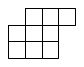}
\,\,
\includegraphics[scale=1.3]{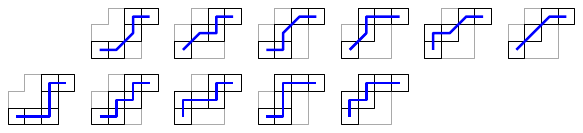}
\caption{Left: the skew shape $\lambda/\mu=433/1$. Right: Ribbon shapes appearing in Theorem~\ref{thm:bijection-delannoy-snakes} and the associated Delannoy paths in $\mathcal{D}(\lambda/\mu)$. }\label{fig:snakes-and-delannoy}
\end{figure}

\begin{theorem}\label{thm:lpm-from-snakes}
Let $\M$ be a lattice path matroid associated to the connected skew shape $\lambda/\mu$. The Ehrhart polynomial of $\mathscr{P}(\M)$ is given by:
\begin{equation}
\ehr(\mathscr{P}(\M),t) \,=\, \sum_{\pi} (-1)^{c(\pi)-1} \ehr(\mathscr{P}(\M(\pi)),t),
\end{equation}
where the sum is over ribbon shapes within the Young diagram that contain the lower-leftmost and upper-rightmost squares of $\lambda/\mu$, where consecutive components intersect in kitty corners in the interior of $\lambda/\mu$, and $c(\pi)$ is the number of connected components of $\pi$.
\end{theorem}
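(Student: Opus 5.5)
The formula will come from the subdivision $\mathcal{S}$ of Theorem~\ref{thm:bijection-delannoy-snakes} together with the valuativity of the Ehrhart polynomial. Write $\mathscr{P}(\M)^\circ$ for the relative interior. Since $\mathcal{S}$ is a polyhedral subdivision of $\mathscr{P}(\M)$, the polytope is the disjoint union of the relative interiors of all faces of $\mathcal{S}$. These split into two groups: the internal faces, which are not contained in the boundary of $\mathscr{P}(\M)$, and the faces lying in $\partial\mathscr{P}(\M)$.

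Inclusion--exclusion over the face lattice, in the form of the standard Euler-characteristic identity for subdivisions, gives
\[
\mathbf{1}_{\mathscr{P}(\M)} \,=\, \sum_{F \text{ internal}} (-1)^{\dim \mathscr{P}(\M)-\dim F}\,\mathbf{1}_{F}.
\]
Equivalently, the valuation of the whole polytope is the alternating sum of the valuations of the closed interior faces. This is the usual identity used, for instance, in \cite[Appendix~A]{ferroni-schroter}. We apply it to the lattice-point counting valuation in each dilate $t\mathscr{P}(\M)$; the dilated subdivision $t\mathcal{S}$ is a lattice subdivision of $t\mathscr{P}(\M)$, so the identity holds for every $t$.

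Next we identify the signs. Each internal face $F$ is the base polytope of a matroid $\mathsf{N}$ on $[n]$, and
\[
\dim F = n - c(\mathsf{N}), \qquad \dim\mathscr{P}(\M) = n-1,
\]
because $\M$ is connected. The sign is therefore $(-1)^{c(\mathsf{N})-1}$. By part (ii) of Theorem~\ref{thm:bijection-delannoy-snakes}, $c(\mathsf{N})-1$ equals the number of diagonal steps of the corresponding admissible Delannoy path.

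It remains to translate the sum over internal faces into the sum over ribbon shapes $\pi$. After the shift of Remark~\ref{rem:delannoy_shift}, an admissible Delannoy path is a path through the centers of cells of $\lambda/\mu$. It starts in the lower-left cell, ends in the upper-right cell, and does not touch the boundary. Fattening the path into the cells it visits produces a ribbon $\pi$:
\begin{itemize}
\item horizontal and vertical steps join edge-adjacent cells;
\item each diagonal step joins two cells meeting only at a kitty corner;
\item the condition that the path does not touch the boundary is exactly the condition that the kitty corner lies in the interior of $\lambda/\mu$.
\end{itemize}
Under this correspondence, diagonal steps match breaks between components, so $c(\pi)-1$ is the number of diagonal steps. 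The snake (or direct sum of snakes) $\M(\pi)$ is the face matroid $\mathsf{N}$ given by the bijection $\varphi$, so $\ehr(\mathscr{P}(\mathsf{N}),t)=\ehr(\mathscr{P}(\M(\pi)),t)$. Summing over internal faces then yields the stated formula.

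The main obstacle is the last step. We must check that the bijection $\varphi$ really sends a path to the face whose matroid is $\M(\pi)$ for the ribbon $\pi$ traced by the path, and that the map from paths to ribbons is itself a bijection, so that no ribbon is counted twice. For this I would unwind the construction in \cite[Theorem~3.3]{luis_delannoy} and \cite[Theorem~4.2.5]{dania}. The key point is that a ribbon determines its cell sequence, hence the path, uniquely.
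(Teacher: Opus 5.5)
Your argument is correct and follows essentially the route the paper relies on. The paper gives no self-contained proof; it derives the formula from the snake subdivision of Theorem~\ref{thm:bijection-delannoy-snakes} together with valuativity of $\ehr$, citing Bidkhori and Morales. That is exactly your interior-face identity, with the signs $(-1)^{\dim\mathscr{P}(\M)-\dim F}=(-1)^{c(\mathsf{N})-1}$ coming from $\dim\mathscr{P}(\mathsf{N})=n-c(\mathsf{N})$. The remaining check you flag, identifying each interior face with the lattice path matroid $\M(\pi)$ of the ribbon traced by its Delannoy path, is likewise taken by the paper from \cite{luis_delannoy} and \cite{dania} rather than reproved.
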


The above theorem is implicit in the proof of \cite[Lemma~4.3.6]{bidkhori}, and can also be found in \cite[Theorem~4.2.5]{dania}. In light of the bijection between admissible Delannoy paths and the snakes subdividing a lattice path matroid, we will henceforth use the notation $\M(\gamma)$ to denote the snake that corresponds to the Delannoy path $\gamma\in \mathcal{D}(\lambda/\mu)$.

The following is a straightforward reformulation of the last theorem after employing the bijection appearing in Theorem~\ref{thm:bijection-delannoy-snakes}.

\begin{cor}\label{cor:lpm-from-delannoy}
Let $\M$ be a lattice path matroid associated to the connected skew shape $\lambda/\mu$. The Ehrhart polynomial of $\mathscr{P}(\M)$ is given by:
\label{cor: 1st reformulation Ehrh LPMs}
\begin{equation}
\ehr(\mathscr{P}(\M),t) \,=\,  \sum_{\gamma \in \mathcal{D}(\lambda/\mu)} (-1)^{d(\gamma)} \ehr(\mathscr{P}(\M(\gamma)),t),
\end{equation}
where $d(\gamma)$ denotes the number of diagonals of the Delannoy path.
\end{cor}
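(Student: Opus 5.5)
This is a reindexing of Theorem~\ref{thm:lpm-from-snakes} through the bijection of Theorem~\ref{thm:bijection-delannoy-snakes}. The plan has three steps: identify the index sets of the two sums, match the summands, and match the signs.

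First, the index sets. The sum in Theorem~\ref{thm:lpm-from-snakes} runs over ribbon shapes $\pi$ inside $\lambda/\mu$ that contain the lower-leftmost and upper-rightmost cells, and whose consecutive connected components meet at kitty corners in the interior. I would show that such ribbons are in bijection with $\mathcal{D}(\lambda/\mu)$. Given $\gamma\in\mathcal{D}(\lambda/\mu)$, shifted as in Remark~\ref{rem:delannoy_shift}, its vertices are centers of cells of $\lambda/\mu$. Let $\pi(\gamma)$ be the set of cells whose centers are vertices of $\gamma$. The correspondence then works as follows.
\begin{enumerate}[(a)]
\item North and east unit steps join edge-adjacent cells. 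This makes each maximal diagonal-free segment a connected ribbon, with no $2\times 2$ square because the path is monotone.
\item A diagonal step $+(1,1)$ joins two cells meeting only at a corner. That corner is a kitty corner.
\item The admissibility conditions (ii)--(iv) of Definition~\ref{def:adm_path} say exactly that the shifted path avoids the boundary. This is equivalent to every kitty corner lying in the interior of $\lambda/\mu$, and to every cell of $\pi(\gamma)$ lying in the shape.
\item The endpoints $(1/2,1/2)$ and $(n-r-1/2,r-1/2)$ are the centers of the lower-leftmost and upper-rightmost cells.
\end{enumerate}
Conversely, a ribbon $\pi$ of the stated form is recovered as the path through the centers of its cells in order, with a diagonal step at each kitty-corner junction. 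Hence $\gamma\mapsto\pi(\gamma)$ is a bijection.

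Second, the summands. Under this bijection, $\M(\pi(\gamma))$ is by definition the snake-type matroid $\M(\gamma)$ attached to $\gamma$. This is exactly the identification the paper records right after Theorem~\ref{thm:bijection-delannoy-snakes}, where the translated Delannoy paths are matched with the snakes of the subdivision. So the polynomials $\ehr(\mathscr{P}(\M(\pi)),t)$ and $\ehr(\mathscr{P}(\M(\gamma)),t)$ agree term by term.

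Third, the signs. Every diagonal step of $\gamma$ separates two consecutive connected components of $\pi(\gamma)$, and every separation between components arises from a diagonal step. Therefore $c(\pi(\gamma))-1=d(\gamma)$. This is also item (ii) of Theorem~\ref{thm:bijection-delannoy-snakes}, which states that $c(\mathsf N)-1$ equals the number of diagonal steps. Substituting into Theorem~\ref{thm:lpm-from-snakes} gives the stated formula.

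The only step needing care is (c): checking that the overlap conditions in Definition~\ref{def:adm_path} correspond exactly to the kitty corners being interior and the cells staying inside the shape. Remark~\ref{rem:delannoy_shift} already states this equivalence, so I would invoke it directly rather than re-verify it case by case.
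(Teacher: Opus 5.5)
Your proposal is correct and follows the paper's route. The paper likewise obtains the corollary by reindexing Theorem~\ref{thm:lpm-from-snakes} through the bijection of Theorem~\ref{thm:bijection-delannoy-snakes}, using item (ii) there to turn $(-1)^{c(\pi)-1}$ into $(-1)^{d(\gamma)}$; your explicit description of $\pi(\gamma)$ via the cell centers of the shifted path only spells out the identification the paper states informally after that theorem.
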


\begin{remark}
    In fact, Theorem~\ref{thm:lpm-from-snakes} and Corollary~\ref{cor:lpm-from-delannoy} hold at the level of \emph{valuative invariants} of matroids (see \cite{ardila-fink-rincon,derksen-fink,ferroni-schroter}). For example, since the Tutte polynomial of a matroid is a valuative invariant too, one can write the Tutte polynomial of a lattice path matroid exactly as the same combination of Tutte polynomials of the snakes appearing in the subdivision appearing in Theorem~\ref{thm:bijection-delannoy-snakes}. 
\end{remark}

\begin{example} \label{ex: master example excited version}
For the lattice path matroid  $\M$ associated to $\lambda/\mu=(433)/(1)$ we have that 
\[
\ehr(\mathscr{P}(\M),t)= \frac{47}{180}t^6 + \frac{109}{60}t^5 + \frac{383}{72}t^4 + \frac{17}{2}t^3 + \frac{2851}{360}t^2 + \frac{251}{60}t + 1
\]
\[
\ytableausetup{smalltableaux}
\begin{array}{l|crr}  \hline
\text{path } \gamma & d(\gamma) &  (-1)^{d(\gamma)} &  \ehr(\mathscr{P}(\M(\gamma))),t) \\ \hline 
\ydiagram{2+2,2+1,3} & 0 & 1 &  \frac{13}{360} \, t^{6} + \frac{49}{120} \, t^{5} + \frac{67}{36} \, t^{4} + \frac{35}{8} \, t^{3} + \frac{2017}{360} \, t^{2} + \frac{223}{60} \, t + 1\\ \hline
\ydiagram{2+2,1+2,2} & 0 &  1 &  \frac{61}{720} \, t^{6} + \frac{61}{80} \, t^{5} + \frac{413}{144} \, t^{4} + \frac{277}{48} \, t^{3} + \frac{2357}{360} \, t^{2} + \frac{119}{30} \, t + 1\\
\ydiagram{2+2,2+1,2} & 1 & -1 &  \frac{1}{6} \, t^{5} + \frac{5}{4} \, t^{4} + \frac{11}{3} \, t^{3} + \frac{21}{4} \, t^{2} + \frac{11}{3} \, t + 1\\ \hline 
\ydiagram{2+2,3,1} & 0 & 1 & \frac{1}{18} \, t^{6} + \frac{17}{30} \, t^{5} + \frac{169}{72} \, t^{4} + \frac{61}{12} \, t^{3} + \frac{439}{72} \, t^{2} + \frac{77}{20} \, t + 1\\
\ydiagram{2+2,1+2,1} & 1 & -1 & \frac{5}{24} \, t^{5} + \frac{35}{24} \, t^{4} + \frac{97}{24} \, t^{3} + \frac{133}{24} \, t^{2} + \frac{15}{4} \, t + 1
 \\ \hline 
\ydiagram{1+3,1+1,2} & 0 & 1 & \frac{13}{360} \, t^{6} + \frac{49}{120} \, t^{5} + \frac{67}{36} \, t^{4} + \frac{35}{8} \, t^{3} + \frac{2017}{360} \, t^{2} + \frac{223}{60} \, t + 1 \\
\ydiagram{2+2,1+1,2} & 1 & -1 & \frac{1}{6} \, t^{5} + \frac{5}{4} \, t^{4} + \frac{11}{3} \, t^{3} + \frac{21}{4} \, t^{2} + \frac{11}{3} \, t + 1
 \\ \hline 
\ydiagram{1+3,2,1} &0 & 1 &  \frac{7}{144} \, t^{6} + \frac{121}{240} \, t^{5} + \frac{307}{144} \, t^{4} + \frac{227}{48} \, t^{3} + \frac{419}{72} \, t^{2} + \frac{113}{30} \, t + 1\\
\ydiagram{1+3,1+1,1} & 1 & -1 & \frac{1}{8} \, t^{5} + \frac{25}{24} \, t^{4} + \frac{79}{24} \, t^{3} + \frac{119}{24} \, t^{2} + \frac{43}{12} \, t + 1\\
\ydiagram{2+2,2,1} & 1 & -1 & \frac{1}{6} \, t^{5} + \frac{5}{4} \, t^{4} + \frac{11}{3} \, t^{3} + \frac{21}{4} \, t^{2} + \frac{11}{3} \, t + 1 \\
\ydiagram{2+2,1+1,1} & 2 & 1 & \frac{1}{2} \, t^{4} + \frac{5}{2} \, t^{3} + \frac{9}{2} \, t^{2} + \frac{7}{2} \, t + 1 \\
\hline 
\text{total} & & &   \frac{47}{180}t^6 + \frac{109}{60}t^5 + \frac{383}{72}t^4 + \frac{17}{2}t^3 + \frac{2851}{360}t^2 + \frac{251}{60}t + 1 \\[1mm] \hline 
\end{array}
\]
\end{example}

\section{The Ehrhart polynomial of a lattice path matroid}

In this section we will prove the main theorem. Since disconnected lattice path matroids are direct sums of connected lattice path matroids, and since the Ehrhart polynomial of matroids is multiplicative under direct sums, we will assume throughout that $\lambda/\mu$ is a connected skew shape with non-empty rows whose associated lattice path matroid is $\M = \M(\lambda/\mu)$, which is connected.  Recall that
    \[ \mathcal{D}(\lambda/\mu) := \{\text{admissible Delannoy paths for $\lambda/\mu$ translated by $(-1/2,-1/2)$}\}.\]
We further introduce the following useful notation:
    \[ \mathcal{L}(\lambda/\mu) := \{\gamma \in \mathcal{D}(\lambda/\mu) \text{ with no diagonal steps}\}.\]

\subsection{Delannoy paths, high peaks, and grouping terms}

In this section we will propose a way of grouping admissible Delannoy paths for a lattice path matroid $\M = \M(\lambda/\mu)$. What we are about to do here is very reminiscent to the proof of \cite[Theorem~4.3]{luis_delannoy}, which led to a strong positivity property of Speyer's $g$-polynomial on Schubert matroids. This grouping relies on a bijection between Delannoy paths and certain marked lattice paths of the shape $\lambda/\mu$. This bijection for the case of $\mu=\varnothing$  appeared in the Schubert calculus literature in work of  Naruse--Okada \cite[Proposition~3.13]{NO} and Morales--Pak--Panova  \cite[Lemma 6.15]{MPP1},  \cite[Proposition~5.1]{MPP4} . See Remark~\ref{remark: connection Excited diagrams} for more details on this connection.

\begin{definition}
Let $\lambda/\mu$ be a connected skew shape. Let
$\ell$ be the number of rows, and write cells as $(i,j)$, with
$i$ increasing from top to bottom and $j$ increasing from left to right.
Define a partition $\nu$ by
\[
\nu_i =
\begin{cases}
\lambda_{i+1}-1, & 1\leq i<\ell,\\
\mu_\ell, & i=\ell.
\end{cases}
\]
The \emph{minimum lattice path} associated to $\lambda/\mu$, denoted
$\gamma_{\min}\in \mathcal{L}(\lambda/\mu)$, is the north-east path whose associated ribbon is
$\lambda/\nu$.

Equivalently, $\gamma_{\min}$ is obtained by starting at the lower-leftmost
cell of $\lambda/\mu$ and moving east whenever possible, moving north only
when an east step would leave the skew shape. Thus, in row $i$, the cells of
$\gamma_{\min}$ are precisely
\[
\{(i,j): \nu_i<j\leq \lambda_i\}.
\]
\end{definition}

\begin{figure}[ht]
\centering









\includegraphics[scale=1.4]{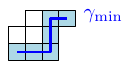}

\caption{The minimum path $\gamma_{\min}$ for $\lambda/\mu=433/1$.}
\label{fig:gamma-min}
\end{figure}

\begin{definition}
    Given a path $\gamma\in\mathcal{L}(\lambda/\mu)$, a {\em peak} is pair of an up step immediately followed by a right step in $\gamma$. The {\em high peaks} of $\gamma$ are the peaks of $\gamma$ that are not also peaks of $\gamma_{\min}$. We denote by $\mathsf{hp}(\gamma)$, the set of high peaks of $\gamma$. 
\end{definition}

\begin{figure}[ht]
\includegraphics[scale=1.4]{skew_shape_running_example.pdf}
\,\,    \includegraphics[scale=1.4]{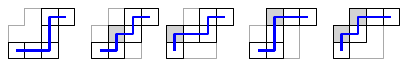}
    \caption{The five paths in $\mathcal{L}(\lambda/\mu)$ (gray) with high peaks distinguished and the associated connected ribbons for the skew shape $\lambda/\mu=433/1$.} \label{fig: excited paths}
    \label{fig: delannoy paths}
\end{figure}

\begin{definition}
A {\em marked path} is a pair $(\gamma,S)$ where $\gamma \in \mathcal{L}(\lambda/\mu)$ and $S\subset \mathsf{hp}(\gamma)$. Given a marked path $(\gamma,S)$, we denote by $\varphi(\gamma,S)$ the Delannoy path in $\mathcal{D}(\lambda/\mu)$ obtained by replacing the high peaks in $S$ by a diagonal step $(1,1)$.
\end{definition}

The next result  shows that each Delannoy path in $\mathcal{D}(\lambda/\mu)$ can be obtained uniquely from a marked path by replacing the marked high peaks into diagonal steps. The case of $\mu=\varnothing$ appeared in the Schubert literature which extends an idea of Naruse and Okada \cite[Proposition~3.13]{NO} (see also \cite[Proposition~5.1]{MPP4}).

\begin{lemma} \label{thm: from delannoy to marked paths}
There is a natural bijection between admissible Delannoy paths in $\lambda/\mu$ and marked north-east lattice paths $(\gamma,S)$ such that $S\subseteq \hp(\gamma)$: 
\begin{equation} \label{eq: decom skew shape}
\mathcal{D}(\lambda/\mu) \xleftarrow{\varphi } \{ (\gamma,S) \mid \gamma \in \mathcal{L}(\lambda/\mu), S \subseteq \mathsf{hp}(\gamma)\},
\end{equation}
In particular $\#\mathcal{D}(\lambda/\mu) =\sum_{\gamma \in \mathcal{L}(\lambda/\mu)} 2^{\#\mathsf{hp}(\gamma)}$.
\end{lemma}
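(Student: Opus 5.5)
We must show that $\varphi$ is well defined, injective, and surjective. Throughout we work with the shifted paths of Remark~\ref{rem:delannoy_shift}: an admissible Delannoy path is a path through centers of cells of $\lambda/\mu$, from the lower-leftmost cell to the upper-rightmost cell, using steps N, E and D (diagonal). Every cell it visits lies in $\lambda/\mu$. A diagonal step from cell $c$ to cell $c'$ is allowed exactly when the two ``corner'' cells $c+\mathrm{N}$ and $c+\mathrm{E}$ both lie in $\lambda/\mu$; this is condition (iv) together with (ii) and (iii). Since $\lambda/\mu$ is a skew shape, it is enough to require that the cell $c+\mathrm{N}$ (the upper-left corner of the unit square) and the cell $c+\mathrm{E}$ (the lower-right corner) are in the shape. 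In fact, for a skew shape, membership of the cell $c+\mathrm{E}$ already follows from membership of $c$ and $c'=c+\mathrm{N}+\mathrm{E}$, because both lie in a common column interval and row interval. So the real constraint on a diagonal step is that $c+\mathrm{N}\in\lambda/\mu$.

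\textbf{Well-definedness.} Let $\gamma\in\mathcal{L}(\lambda/\mu)$ and let a peak of $\gamma$ pass through cells $c\to c+\mathrm{N}\to c+\mathrm{N}+\mathrm{E}$. Replacing it by a diagonal step is admissible as soon as $c+\mathrm{E}\in\lambda/\mu$. I claim this holds exactly when the peak is high. If $c+\mathrm{E}\notin\lambda/\mu$, then $c$ is the last cell of its row, so $\gamma$ coincides with the greedy path $\gamma_{\min}$ at that point and turns north there. Hence the peak is also a peak of $\gamma_{\min}$. Conversely, a peak of $\gamma_{\min}$ occurs precisely where an east step would exit the shape, since $\gamma_{\min}$ goes north only when forced. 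So peaks of $\gamma_{\min}$ cannot be turned into diagonals. I will make this precise by checking that $\gamma$ and $\gamma_{\min}$ share a peak at $c$ only when $c$ is the row-end cell $(i,\lambda_i)$. Distinct peaks do not overlap as step pairs, so several of them may be replaced simultaneously. Therefore $\varphi(\gamma,S)\in\mathcal{D}(\lambda/\mu)$.

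\textbf{Inverse map.} Given $\delta\in\mathcal{D}(\lambda/\mu)$, replace each diagonal step by the pair N then E. The inserted cell $c+\mathrm{N}$ lies in the shape: as noted above, admissibility requires this upper-left corner (after translating, I will carefully check which corner condition (ii) and (iv) impose; if it is instead the lower-right corner, I will use the pair E then N and redefine peaks as valleys symmetrically). This yields $\gamma\in\mathcal{L}(\lambda/\mu)$, and each replaced step is a peak. Let $S$ be the set of these peaks. By the well-definedness argument, an admissible diagonal forces $c+\mathrm{E}\in\lambda/\mu$, so each peak in $S$ is not a peak of $\gamma_{\min}$, giving $S\subseteq\hp(\gamma)$. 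The two constructions are mutually inverse: the peaks created by the inverse map are exactly the marked ones, and a diagonal step can only arise from a marked peak. This proves the bijection. The counting formula then follows by summing $2^{\#\hp(\gamma)}$ over $\gamma\in\mathcal{L}(\lambda/\mu)$.

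\textbf{Main obstacle.} The delicate step is matching the admissibility conditions (ii)--(iii), phrased as overlaps with the boundary paths $U$ and $L$, to the cell-level corner condition. In particular I must confirm that the forbidden diagonals are exactly those at peaks shared with $\gamma_{\min}$. I would settle this via Remark~\ref{rem:delannoy_shift}, using the fact that ``not touching the boundary'' for a diagonal segment means that the open unit square it crosses has all four cell-centers in the shape. I would then verify on the example of Figure~\ref{fig: delannoy paths}, using $\lambda/\mu=433/1$, that the count $\sum 2^{\#\hp}$ equals the $11$ paths of Example~\ref{ex: master example excited version}.
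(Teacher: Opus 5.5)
Your plan is the same as the paper's: expand each diagonal $c\to c'=c+\mathrm{N}+\mathrm{E}$ into the peak $c\to c+\mathrm{N}\to c'$, mark it, and identify high peaks with peaks whose lower-right cell $c+\mathrm{E}$ lies in the shape. But your first paragraph contains a false claim, and it fails exactly where the lemma has content. You say that $c+\mathrm{E}\in\lambda/\mu$ follows from $c,c'\in\lambda/\mu$, so that the only real constraint is $c+\mathrm{N}\in\lambda/\mu$. Convexity of a skew shape only fills the rectangle between two cells in NW--SE position. Here $c$ and $c'$ are in SW--NE position, so nothing follows. For $\lambda=(2,1)$ with $c=(2,1)$, $c'=(1,2)$, one has $c+\mathrm{E}=(2,2)\notin\lambda$. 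Symmetrically, for $22/1$ one has $c+\mathrm{N}=(1,1)\in\mu$. In the running example $433/1$, the peak of $\gamma_{\min}$ at $(1,3)$ has $c+\mathrm{E}=(2,4)\notin\lambda$. If your claim held, every peak of every $\gamma\in\mathcal{L}(\lambda/\mu)$ could be turned into a diagonal, because its cell $c+\mathrm{N}$ lies on $\gamma$. The count would then be $\sum_\gamma 2^{\#\mathrm{peaks}(\gamma)}=16$ for $433/1$, not the correct $11$. Your later paragraphs silently use the opposite (correct) condition, so the proposal is internally inconsistent. The decisive verification is left as a to-do, and the fallback of switching to valleys would not prove the lemma as stated, which is about peaks and $\gamma_{\min}$.

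The missing fact, which the paper's proof uses, is this. By Remark~\ref{rem:delannoy_shift}, a diagonal step $c\to c'$ passes through the common vertex of the $2\times 2$ block $\{c,\,c+\mathrm{N},\,c+\mathrm{E},\,c'\}$. It avoids the boundary if and only if all four cells lie in $\lambda/\mu$. The upper-left cell can be cut off by $\mu$ and the lower-right cell by $\lambda$, independently of each other. With this, both directions close:
\begin{itemize}
\item For a peak of $\gamma$, the cell $c+\mathrm{N}$ is on $\gamma$. So admissibility of the diagonal reduces to $c+\mathrm{E}\in\lambda/\mu$, i.e.\ $c$ is not the last cell $(i,\lambda_i)$ of its row (it cannot fail because of $\mu$, since $c\notin\mu$). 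As you noted, this is exactly the condition that the peak is not a peak of $\gamma_{\min}$.
\item Conversely, an admissible diagonal gives $c+\mathrm{N}\in\lambda/\mu$, so the N-then-E expansion stays in the shape. It also gives $c+\mathrm{E}\in\lambda/\mu$, so the resulting peak is high.
\end{itemize}
Replace the false sentence with this block criterion; the rest of your argument (disjointness of peaks, mutual inverseness, and the count) then goes through as in the paper.
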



\begin{proof}
By Definition~\ref{def:adm_path}, Remark~\ref{rem:delannoy_shift} and Figure~\ref{fig:snakes-and-delannoy} we see that the admissible [shifted] Delannoy paths are the ones that start at the bottommost leftmost square of $\lambda/\mu$, end at the topmost rightmost square, and are entirely contained inside the shape. In particular, the diagonal steps of the paths cannot touch the corners of the upper or lower boundary of $\lambda/\mu$. Touching a corner of the lower boundary (i.e. $\lambda$) is equivalent to the diagonal step cutting a peak in $\gamma_{\min}$. Touching the corner of the upper boundary implies touching a corner of the shape $\mu$. Since this is not allowed, a diagonal step of the Delannoy path is contained in a $2 \times 2$ box, going through its center, which sits entirely in $\lambda/\mu$.  Let the Delannoy path be $\delta$ and the endpoints of its diagonal steps  be $A_1=(a_1,b_1)$ and $C_1=(a_1+1,b_1+1)$, $A_2=(a_2,c_2)$ and $C_2=(a_2+1,b_2+1)$ etc, then $B_i=(a_i,b_i+1)$ and $D_i=(a_i+1,b_i)$ are inside $\lambda/\mu$ for each $i$. We can write the path $\delta = \gamma_1 A_1C_1 \gamma_2 A_2C_2 \gamma_3 \cdots$, where the paths $\gamma_i$ (which can be empty) do not have diagonal steps. We then make $\gamma = \gamma_1 A_1D_1C_1 \gamma_2 A_2D_2C_2 \cdots$ and $S = \{D_1,D_2,\ldots\}$, where we replace each diagonal step $A_iC_i$ by the peak of North and East steps $A_iD_iC_i$ and mark that peak in the set $S$. By the above consideration the path $\gamma\in \mathcal{L}(\lambda/\mu)$ and each $D_i$ is a high peak since its diagonal square $B_i$ is inside the shape. By construction we have $\varphi(\gamma,S) = \delta$. This completes the proof.  
\end{proof}

\begin{figure}
    \includegraphics[width=2in]{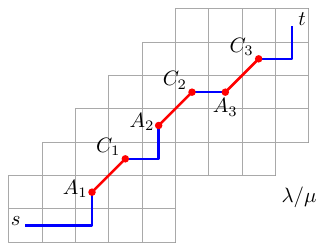}
    \caption{Decomposition of Delannoy paths in the proof of Lemma~\ref{thm: from delannoy to marked paths}.}
\end{figure}

\begin{remark} \label{remark: connection Excited diagrams}
The decomposition in \eqref{eq: decom skew shape} for the case when $\mu=\varnothing$ appeared in the context of Schubert calculus. Namely, Naruse announced in 2014 a formula for the number of standard Young tableaux of skew shape $\lambda/\mu$ as a nonnegative sum of products of hook-lengths. The sum is indexed by certain subsets of the cells of the {\em Young diagram} of $\lambda$ called {\em excited diagrams} of the skew shape introduced by Naruse and Ikeda \cite{IN1} that appeared in their study of structure constants of {\em equivariant Schubert calculus}. There is a generalization of this story in the context of {\em equivariant K-theory} that uses a larger class of subsets of the Young diagram of $\lambda$ called {\em generalized excited diagrams} introduced by Graham--Kreiman \cite{GK} and Ikeda--Naruse \cite{IN2}.  When the skew shape is a connected ribbon with underlying shape $\lambda/\nu$, the excited diagrams  and generalized excited diagrams of this shape are exactly the supports of the lattice paths $\mathcal{L}(\lambda)$ and Delannoy paths $\mathcal{D}(\lambda)$, respectively. Naruse and Okada showed in \cite[Prop. 3.13]{NO} a connection between generalized excited diagrams that for ribbon shapes is exactly the case when $\mu=\varnothing$ of \eqref{eq: decom skew shape}
\begin{equation} \label{eq:decomp straight shape case}
\mathcal{D}(\lambda) \,=\, \{ \varphi(\gamma,S) \mid \gamma \in \mathcal{L}(\lambda), S \subseteq \mathsf{hp}(\gamma)\}.
\end{equation}
In fact, it was this special case the lead us to look for the skew version of this statement in Lemma~\ref{thm: from delannoy to marked paths}.
It would be interesting to see if this skew version is of interest in Schubert calculus. See for example the marked bumpless pipe dreams in \cite{AWBPD}.
\end{remark}

As advertised above, the above result suggests a way of grouping our admissible Delannoy paths. Concretely, two admissible Delannoy paths in $\mathcal{D}(\lambda/\mu)$ are in the same group if the lattice path $\gamma\in \mathcal{L}(\gamma/\mu)$ is the same in their image $(\gamma,S)$ under the bijection appearing in the preceding lemma.

\begin{definition}
    Given a path $\gamma$ in $\mathcal{L}(\lambda/\mu)$, let 
    \[
    \ehr^{\pm}(\mathscr{P}(\M(\gamma)),t) := \sum_{S \subseteq \mathsf{hp}(\gamma)} (-1)^{\#S} \ehr(\mathscr{P}(\M(\varphi(\gamma,S)),t).
    \]
\end{definition}

To avoid confusions, we emphasize that the polynomial $\ehr^{\pm}(\mathscr{P}(\M(\gamma)),t)$ is not an Ehrhart polynomial, but rather a signed sum of Ehrhart polynomials.

\begin{theorem} \label{thm: 2nd reformulation Ehrh LPMs}
The Ehrhart polynomial of the lattice path matroid associated to the skew shape $\lambda/\mu$ can be computed as follows:
\begin{equation}
\ehr(\mathscr{P}(\M(\lambda/\mu)),t) \,=\, \sum_{\gamma \in \mathcal{L}(\lambda/\mu)} \ehr^{\pm}(\mathscr{P}(\M(\gamma)),t).
\end{equation}
\end{theorem}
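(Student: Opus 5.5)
The plan is to start from Corollary~\ref{cor:lpm-from-delannoy}, which expresses $\ehr(\mathscr{P}(\M(\lambda/\mu)),t)$ as the signed sum $\sum_{\gamma\in\mathcal{D}(\lambda/\mu)}(-1)^{d(\gamma)}\ehr(\mathscr{P}(\M(\gamma)),t)$. We then reindex this sum using the bijection $\varphi$ of Lemma~\ref{thm: from delannoy to marked paths}. Every $\delta\in\mathcal{D}(\lambda/\mu)$ is written uniquely as $\delta=\varphi(\gamma,S)$ with $\gamma\in\mathcal{L}(\lambda/\mu)$ and $S\subseteq\hp(\gamma)$. Hence the sum over $\mathcal{D}(\lambda/\mu)$ becomes a double sum
\[
\sum_{\gamma\in\mathcal{L}(\lambda/\mu)}\ \sum_{S\subseteq\hp(\gamma)} (-1)^{d(\varphi(\gamma,S))}\,\ehr(\mathscr{P}(\M(\varphi(\gamma,S))),t).
\]

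The only remaining point is to match the signs, namely to check that $d(\varphi(\gamma,S))=\#S$. By construction, $\varphi(\gamma,S)$ replaces each marked high peak, an N step followed by an E step, by a single diagonal step $(1,1)$. Since $\gamma\in\mathcal{L}(\lambda/\mu)$ has no diagonal steps to begin with, the diagonal steps of $\varphi(\gamma,S)$ are exactly those created from the peaks in $S$.

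Distinct peaks of a north-east path never share a step: a peak consists of an N step and the E step right after it, so two distinct peaks are disjoint pairs of consecutive steps. The replacements therefore do not interfere with one another and produce exactly $\#S$ diagonals. Consequently $(-1)^{d(\varphi(\gamma,S))}=(-1)^{\#S}$, and the inner sum is by definition $\ehr^{\pm}(\mathscr{P}(\M(\gamma)),t)$, which gives the claimed identity.

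I do not expect a genuine obstacle, since the combinatorial content is already in Lemma~\ref{thm: from delannoy to marked paths}. The one point needing care is that the reindexing is a true bijection onto $\mathcal{D}(\lambda/\mu)$, with no Delannoy path counted twice or omitted. This is exactly what the lemma provides: surjectivity comes from the explicit inverse constructed in its proof, and injectivity comes from recovering $\gamma$ by re-expanding each diagonal into its N–E peak.
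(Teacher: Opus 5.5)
Your proposal is correct and takes the same approach as the paper, which obtains the identity by reindexing the signed sum of Corollary~\ref{cor:lpm-from-delannoy} through the bijection $\varphi$ of Lemma~\ref{thm: from delannoy to marked paths}. Your explicit check that $d(\varphi(\gamma,S))=\#S$, using that distinct peaks occupy disjoint pairs of consecutive steps, supplies the sign-matching detail that the paper leaves implicit.
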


\begin{proof}
The result follows by combining Corollary~\ref{cor: 1st reformulation Ehrh LPMs} with Lemma~\ref{thm: from delannoy to marked paths}. 
\end{proof}

\begin{remark}
    The above result holds at the level of valuative invariants. In other words, if $f$ is a valuative invariant of matroids, by defining:
    \[
    f^{\pm}(\M(\gamma)) := \sum_{S \subseteq \mathsf{hp}(\gamma)} (-1)^{\#S} f(\M(\varphi(\gamma,S))),
    \]
    one can compute:
    \[f(\M(\lambda/\mu)) \,=\, \sum_{\gamma \in \mathcal{L}(\lambda/\mu)} f^{\pm}(\M(\gamma)).\]
\end{remark}

\begin{example} \label{ex: master example excited version2}
Let us illustrate Theorem~\ref{thm: 2nd reformulation Ehrh LPMs}. For the lattice path matroid  $\M$ associated to $\lambda/\mu=433/1$ we have that 
\[
\ehr(\mathscr{P}(\M),t)= \frac{47}{180}t^6 + \frac{109}{60}t^5 + \frac{383}{72}t^4 + \frac{17}{2}t^3 + \frac{2851}{360}t^2 + \frac{251}{60}t + 1
\]
\[
\ytableausetup{smalltableaux}
\begin{array}{r|l}  \hline
\text{path } \gamma & \ehr^{\pm}(\mathscr{P}(\M(\gamma)),t) \\ \hline 
\gamma_{\min} \,\,\ydiagram{2+2,2+1,3} &  \frac{13}{360} \, t^{6} + \frac{49}{120} \, t^{5} + \frac{67}{36} \, t^{4} + \frac{35}{8} \, t^{3} + \frac{2017}{360} \, t^{2} + \frac{223}{60} \, t + 1\\[3pt]
\gamma_2 \,\,\begin{ytableau}
\none &  \none & &  \\
\none & *(gray!50) &   \\
 &   \\
\end{ytableau}
& \frac{61}{720} \, t^{6} + \frac{143}{240} \, t^{5} + \frac{233}{144} \, t^{4} + \frac{101}{48} \, t^{3} + \frac{467}{360} \, t^{2} + \frac{3}{10} \, t\\[3pt]
\gamma_3 \,\,\begin{ytableau}
\none &  \none & &  \\
 *(gray!50) & &    \\
    \\
\end{ytableau}
& \frac{1}{18} \, t^{6} + \frac{43}{120} \, t^{5} + \frac{8}{9} \, t^{4} + \frac{25}{24} \, t^{3} + \frac{5}{9} \, t^{2} + \frac{1}{10} \, t\\[3pt]
\gamma_4 \,\,\begin{ytableau}
\none &  *(gray!50) &  &  \\
\none &    \\
 &   \\
\end{ytableau}
& \frac{13}{360} \, t^{6} + \frac{29}{120} \, t^{5} + \frac{11}{18} \, t^{4} + \frac{17}{24} \, t^{3} + \frac{127}{360} \, t^{2} + \frac{1}{20} \, t  \\[3pt]

\gamma_5 \,\,\begin{ytableau}
\none &  *(gray!50) &  &  \\
*(gray!50) &    \\
   \\
\end{ytableau}
& \frac{7}{144} \, t^{6} + \frac{17}{80} \, t^{5} + \frac{49}{144} \, t^{4} + \frac{13}{48} \, t^{3} + \frac{1}{9} \, t^{2} + \frac{1}{60} \, t \\ \hline 
\text{total} &  \frac{47}{180}t^6 + \frac{109}{60}t^5 + \frac{383}{72}t^4 + \frac{17}{2}t^3 + \frac{2851}{360}t^2 + \frac{251}{60}t + 1 \\ \hline 
\end{array}
\]
\end{example}

\subsection{From Ehrhart polynomials to plane partitions}

For a skew shape $\lambda/\mu$, a \emph{plane partition} is a filling of the Young diagram of $\lambda/\mu$ with positive integers that is weakly decreasing in rows and columns. For a skew shape $\theta=\lambda/\mu$, let $\PP_{\theta}(t)$ be the number of plane partitions of shape $\theta$ with entries $0, 1,\ldots,t$. Let $\Omega(\theta;t)$ be the order polynomial of the poset corresponding to $\theta$, which is the same as counting plane partitions with entries $1,\ldots,t$ when $\theta$ is a skew shape. By definition we have that 
$$\PP_\theta(t) = \Omega(\theta;t+1),$$
and we will use them interchangeably throughout.  The following result, proved in our prequel \cite{ferroni-morales-panova}, is one of the crucial positivity ingredients needed in the main proof.

\begin{thm}[{\cite[Corollary~5.1]{ferroni-morales-panova}}]
\label{coro:fences-order-positive-main}
    Let $\M$ be a snake matroid, associated to the ribbon shape $\theta$. Then, the following equality holds:
    \[\ehr(\mathscr{P}(\M),t-1)=\Omega(\theta;t).\] Moreover, these polynomials have nonnegative coefficients.
\end{thm}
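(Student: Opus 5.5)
The statement has two parts: the identity $\ehr(\mathscr{P}(\M),t-1)=\Omega(\theta;t)$ and the positivity of the coefficients. I would prove the identity by exhibiting an explicit unimodular equivalence between $\mathscr{P}(\M)$ and the order polytope of the fence poset whose elements are the cells of $\theta$. The positivity would then be a statement purely about order polynomials of fences.

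For the identity, write $\M=\M[L,U]$ on $[n]$ and let $L_m=|s(L)\cap[m]|$ and $U_m=|s(U)\cap[m]|$. A point of $t\,\mathscr{P}(\M)\cap\mathbb{Z}^n$ is an integer vector $x$ satisfying
\[
0\le x_i\le t, \qquad tL_m\le y_m\le tU_m,
\]
where $y_m=x_1+\dots+x_m$. I would use the following standard fact without re-deriving it: the lattice path matroid polytope is cut out by these inequalities and is integrally described by them.

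Since $\theta$ is a connected ribbon, each antidiagonal of the shape contains exactly one cell. Hence $U_m-L_m=1$ for $0<m<n$, with $y_0=0$ and $y_n=tk$. The interior prefix indices $m=1,\dots,n-1$ are in bijection with the $n-1$ cells of $\theta$. I would change variables to $z_m=y_m-tL_m\in[0,t]$; this is a triangular, unimodular integer change of coordinates. The constraint $0\le x_m=y_m-y_{m-1}\le t$ then reads
\[
0\le z_m-z_{m-1}+t(L_m-L_{m-1})\le t.
\]
There are two cases:
\begin{itemize}
\item If the $m$-th step of $L$ is E, this says $z_{m-1}\le z_m$.
\item If the $m$-th step of $L$ is N, this says $z_m\le z_{m-1}$.
\end{itemize}
In each case the remaining half of the inequality is implied by $z_{m-1},z_m\in[0,t]$. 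So the lattice points are exactly the order-preserving maps from the fence, whose comparabilities are read off from the steps of $L$, to $\{0,\dots,t\}$. These are counted by $\Omega(\theta;t+1)$. Replacing $t$ by $t-1$ gives the identity, after checking that the up/down pattern matches the plane-partition convention for $\theta$ (possibly up to reversing all order relations, which does not change $\Omega$).

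For positivity, it suffices to show that $\Omega(\theta;t)$ has nonnegative coefficients in $t$. Indeed, $\ehr(\mathscr{P}(\M),s)=\Omega(\theta;s+1)$, and a polynomial in $s+1$ with nonnegative coefficients has nonnegative coefficients in $s$. I would argue by induction on the number of segments of the fence, via a transfer-matrix recursion. Let $F_\theta(t,j)$ be the number of order-preserving maps to $[t]$ whose last cell takes value $j$. Appending a new cell gives either $F_{\theta'}(t,j)=\sum_{i\le j}F_\theta(t,i)$ or $F_{\theta'}(t,j)=\sum_{i\ge j}F_\theta(t,i)$, and $\Omega(\theta;t)=\sum_j F_\theta(t,j)$.

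The key step, which I expect to be the real obstacle, is choosing an invariant for $F_\theta(t,j)$ that survives both kinds of summation. Positivity in the monomial basis is not preserved by $\sum_{i\ge j}$, because that operation introduces differences such as $t-j$. I would try to prove the stronger claim that $F_\theta(t,j)$ is a nonnegative combination of products $\binom{j+a}{a}\binom{t-j+b}{b}$. Both partial summations preserve this form by Vandermonde/hockey-stick identities, and at the end one needs $\sum_j\binom{j+a}{a}\binom{t-j+b}{b}=\binom{t+a+b+1}{a+b+1}$-type expressions to have nonnegative coefficients in $t$. They do not in general, so that last step would have to be refined. The refinement I would try first is to keep the ribbon's turning structure explicit and pair terms so that the negative contributions cancel, for example by expanding in a basis of the form $t^a(t+1)^b$ or in powers of $(t-1)$ as suggested in the introduction.
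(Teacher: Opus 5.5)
\textbf{Identity part.} Your proof of $\ehr(\mathscr{P}(\M),t-1)=\Omega(\theta;t)$ is correct. It is the standard route: the change of variables $z_m=y_m-tL_m$ is exactly the unimodular equivalence between the snake polytope and the order polytope of the fence. The boundary steps $m=1$ and $m=n$ cause no trouble, because $U_1-L_1=U_{n-1}-L_{n-1}=1$ forces $L$ to begin with E and end with N. Also, your $z$ is a reverse plane partition of $\theta$, so the duality caveat is genuinely needed, and it suffices. The paper itself does not reprove the statement; it imports it from \cite{ferroni-morales-panova}.

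\textbf{Positivity part: a genuine gap.} You never prove that $\Omega(\theta;t)$ has nonnegative coefficients, and that is the substantive half of the result. The invariant you propose also fails inside the recursion, not at the end.
\begin{itemize}
\item With your literal family $\binom{j+a}{a}\binom{t-j+b}{b}$ and values in $[t]$, the two-element chain already fails: $F(t,j)=j=\binom{j+1}{1}-1$.
\item With the corrected family $\binom{j-1+a}{a}\binom{t-j+b}{b}$, the three-element fence $c_1<c_2>c_3$ gives
\[
F(t,j)=\sum_{i=j}^{t} i=\binom{j}{1}\binom{t-j+1}{1}+\binom{t-j+2}{2}-\binom{t-j+1}{1}.
\]
Comparing top-degree parts in the variables $u=j-1$ and $v=t-j$ shows that no nonnegative expansion exists. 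The summation in the ``wrong'' direction creates the strict inequality $c_1>j$, hence a shifted binomial $\binom{t-j+1}{2}$.
\item The final step you worried about is actually harmless:
\[
\sum_{j=1}^{t}\binom{j-1+a}{a}\binom{t-j+b}{b}=\binom{t+a+b}{a+b+1}=\frac{t(t+1)\cdots(t+a+b)}{(a+b+1)!},
\]
which has nonnegative coefficients.
\end{itemize}

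\textbf{Why no formal repair of this kind can work.} Positivity is a special feature of fences, not of partial summation. For the poset with one minimum below four maxima,
\[
\Omega(t)=\sum_{m=1}^t m^4=\frac{t^5}{5}+\frac{t^4}{2}+\frac{t^3}{3}-\frac{t}{30}.
\]
This also shows that a single one-sided summation of a polynomial with nonnegative coefficients can create negative coefficients. So your step ``pair terms so that the negative contributions cancel'' is not a detail; it is the entire theorem. It must exploit the path structure of $\theta$ in an essential way.

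\textbf{What the literature does.} In \cite{ferroni-morales-panova} positivity is established by a recursive argument tailored to fences. Later, Kahane \cite{Kahane,KahaneStat} and Huang \cite{huang-stat} gave a combinatorial interpretation of the coefficients of $|\theta|!\,\Omega(\theta;t)$. To close the gap, either invoke one of these results, or exhibit an invariant that provably survives both $\sum_{i\le j}$ and $\sum_{i\ge j}$ and still sums to a polynomial with nonnegative coefficients.
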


The plane partition interpretation will be instrumental to deduce the main theorem of the present article. Until now, we have stated results that work \emph{mutatis mutandis} for any valuation. However, by relying on this fact we can prove the following crucial result. Recall that an {\em order filter} or dual order ideal of a poset $P$ is an upward closed subset of $P$.

\begin{theorem}\label{thm: ehr +- fence is nonnegative }
For a connected skew shape $\lambda/\mu$ and a path $\gamma$ in $\mathcal{D}(\lambda/\mu)$ other than $\gamma_{\min}$, we have that
$$
\ehr^{\pm}(\mathscr{P}(\M(\gamma)),t) \,=\, \sum_{
F\subset \gamma \setminus \langle \hp(\gamma)\rangle} \Omega(\gamma \setminus F ;t),
$$
where we view $\gamma$ as a poset and the sum is over order filters $F$ of it. In particular $\ehr^{\pm}(\mathscr{P}(\M(\gamma)),t)$, has nonnegative coefficients.
\end{theorem}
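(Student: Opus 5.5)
We work with the shifted Ehrhart polynomials: by Theorem~\ref{coro:fences-order-positive-main}, for each marked path $(\gamma,S)$ the snake $\M(\varphi(\gamma,S))$ corresponds to a (possibly disconnected) ribbon $\theta_S$, and $\ehr(\mathscr{P}(\M(\varphi(\gamma,S))),t-1)=\Omega(\theta_S;t)$. For disconnected ribbons we use that both sides are multiplicative over connected components. Here $\theta_S$ is obtained from the ribbon $\gamma$ by deleting, for each marked high peak $p\in S$, the corner cell of $\gamma$ at that peak (the cell $D_i$ in the proof of Lemma~\ref{thm: from delannoy to marked paths}). Deleting $D_i$ splits the ribbon at a kitty corner. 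So, after shifting $t\mapsto t-1$, the statement becomes a purely poset-theoretic identity:
\[
\sum_{S\subseteq \hp(\gamma)} (-1)^{\#S}\,\Omega(\gamma\setminus S;t)\;=\;\sum_{F}\Omega(\gamma\setminus F;t),
\]
where $\gamma$ is viewed as a fence poset and $F$ runs over order filters of $\gamma$ contained in $\gamma\setminus\langle \hp(\gamma)\rangle$. Here $\langle\hp(\gamma)\rangle$ is the order ideal generated by the high peak cells. I will also need to check the orientation: in the plane-partition order, where entries weakly decrease to the right and downward, each peak cell is a maximal or minimal element of the fence. I will fix the convention so that high peaks are the elements whose removal appears in the alternating sum.

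\textbf{Inclusion--exclusion step.} The left side counts order-preserving maps $f:\gamma\setminus S\to[t]$ with signs. I plan to use the standard decomposition of maps on $\gamma$ according to which elements of $\hp(\gamma)$ are ``forced'' or ``free''. For a peak $p$ that is an extremal element of the poset, $\Omega(\gamma\setminus p;t)-\Omega(\gamma;t)$ counts maps on $\gamma\setminus p$ that cannot be extended to $p$, i.e.\ its neighbours' values violate the constraint. Iterating over all of $\hp(\gamma)$ by inclusion--exclusion, the alternating sum equals $(-1)^{\#\hp(\gamma)}$ times the number of maps on $\gamma\setminus \hp(\gamma)$ for which no high peak can be filled, i.e.\ the neighbours of every high peak are in ``strict conflict''. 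Equivalently, up to the $t\mapsto t+1$ shift and a sign from reciprocity, it counts maps in which a strict inequality is imposed across each high peak. The sign must come out positive; the translation between $\PP$ and $\Omega$ and Stanley-type reciprocity should absorb it. This bookkeeping is where I expect errors.

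\textbf{Converting strict conditions into filters (main obstacle).} Next I count order-preserving maps on the fence with some strict inequalities, namely those generated at the high peaks, and rewrite this count as $\sum_F \Omega(\gamma\setminus F;t)$. The plan is to partition such maps by the set $F$ of elements attaining the extreme value ($t$, or $0$ in the $\PP$ normalization). Since $f$ is order-preserving, $F$ is an order filter. Removing $F$ leaves a map on $\gamma\setminus F$ with values in a range shortened by one, which recovers $\Omega(\gamma\setminus F;t)$ after the shift $\Omega(\cdot;t)=\PP(\cdot;t-1)$.

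The strict conditions at the high peaks are what forbid $F$ from meeting the ideal $\langle\hp(\gamma)\rangle$, giving the constraint $F\subseteq\gamma\setminus\langle\hp(\gamma)\rangle$. Making this exact will likely require an induction on $\#\hp(\gamma)$, peeling off one high peak at a time using the recursion $\Omega(P;t)-\Omega(P\setminus p;t)$ for an extremal element $p$. In that induction I must check that the filters arising in each step glue together consistently. I expect this matching to be the delicate part.

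The hypothesis $\gamma\neq\gamma_{\min}$ ensures $\hp(\gamma)\neq\varnothing$; for $\gamma_{\min}$ the formula would wrongly include the empty filter at shifted values. Nonnegativity of the coefficients then follows immediately, since each $\Omega(\gamma\setminus F;t)$ is the order polynomial of a disjoint union of fences. By Theorem~\ref{coro:fences-order-positive-main} together with multiplicativity, each such polynomial has nonnegative coefficients.
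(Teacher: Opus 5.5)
\textbf{There is a genuine gap.} Your overall plan is the right one: pass to order polynomials, do inclusion--exclusion over the high peaks, then sort fillings by the filter of top entries. However, the target identity you write down is misnormalized, and the central inclusion--exclusion step rests on a false claim.

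\textbf{The target identity.} By Theorem~\ref{coro:fences-order-positive-main} and multiplicativity, $\ehr(\mathscr{P}(\M(\varphi(\gamma,S))),t)=\PP_{\gamma\setminus S}(t)=\Omega(\gamma\setminus S;t+1)$. So the statement to prove is $\sum_S(-1)^{\#S}\Omega(\gamma\setminus S;t+1)=\sum_F\Omega(\gamma\setminus F;t)$. Your display has $t$ on both sides, and that version is false. Take $\lambda=(2,2)$ and $\gamma=21$: its only high peak is the cell $(1,1)$, and $\langle\hp(\gamma)\rangle=\gamma$. At $t=1$ your left side is $1-1=0$, while the right side is $\Omega(21;1)=1$.

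\textbf{The inclusion--exclusion step.} If $p$ is an extremal element, every order-preserving map on $\gamma\setminus p$ extends to $p$. Hence $\Omega(\gamma\setminus p;t)-\Omega(\gamma;t)\le 0$, and it does not count non-extendable maps. The subsequent story about ``strict conflicts'', a global sign $(-1)^{\#\hp(\gamma)}$, and reciprocity has no basis; no reciprocity enters the argument at all.

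\textbf{The missing observation.} A peak cell (an up step followed by a right step) is weakly larger than both of its neighbours in a plane partition, so it is a maximal element of the fence. Deleting it is therefore the same as pinning its entry to the top value. Concretely, $\PP_{\gamma\setminus S}(t)$ equals the number of plane partitions of $\gamma$ with entries in $\{0,\dots,t\}$ that equal $t$ on $S$. Inclusion--exclusion over $S\subseteq\hp(\gamma)$ then gives directly the number of plane partitions of $\gamma$ with entries in $\{0,\dots,t\}$ in which every high peak is $<t$.

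\textbf{The filter step needs no induction.} The cells with entry $t$ form an order filter $F$. This filter avoids $\langle\hp(\gamma)\rangle$, because every cell below a high peak is bounded by that peak's entry, which is $<t$. Conversely, any such $F$ together with any filling of $\gamma\setminus F$ by $0,\dots,t-1$ gives a valid filling. This produces exactly $\Omega(\gamma\setminus F;t)$, with no gluing issues.

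\textbf{A smaller error.} The hypothesis $\gamma\neq\gamma_{\min}$ plays no role in the argument. For $\gamma_{\min}$ the same reasoning yields the true identity $\Omega(\gamma_{\min};t+1)=\sum_F\Omega(\gamma_{\min}\setminus F;t)$. So your claim that the formula fails there is incorrect.
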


\begin{proof}
For a path $\gamma$ in $\mathcal{L}(\lambda/\mu)$ viewed as a ribbon shape, $S \subset \hp(\gamma)$, and for $t\in \mathbb{N}$, the quantity $\ehr(\mathscr{P}(\M(\varphi(\gamma,S)),t)$ counts the number of plane partitions of shape $\gamma \setminus S$  with entries $0,\ldots,t$. This is equivalent to plane partitions of $\gamma$ with the entries in the cells of $S$ having values $t$ and can be written as $\PP_\gamma(t;S=t)$. Thus, grouping the plane partitions by which high peaks have values equal to $t$ (set $T$), we uncover the inclusion-exclusion identity
\begin{align*}
    \ehr^{\pm}(\mathscr{P}(\M(\gamma)),t) &= \sum_{S \subset \hp(\gamma)} (-1)^{\#S} \PP_{\gamma}(t;S=t) \\
    &= \sum_{T \subset \hp(\gamma)} \PP_{\gamma}(t; T=t, \hp(\gamma)\setminus T<t) \underbrace{\sum_{S \subset T} (-1)^{\# S} }_{(1-1)^{\#T}} = \PP_{\gamma}(t; \hp(\gamma)<t), 
\end{align*}
which leaves only the plane partitions with no high peaks having values $t$.

The ribbon-plane partitions counted in $\PP_{\gamma}(t;\hp<t)$ may also have entries less than $t$ on the elements of the order ideal $\langle \hp(\gamma)\rangle$ and may still have other entries equal to $t$ on the remaining shape $\gamma\setminus \langle \mathsf{hp}(\gamma)\rangle$. The entries equal to $t$ form an order filter (an upward closed subset). By specifying the order filter $F$ of  $\gamma\setminus \langle \mathsf{hp}(\gamma)\rangle$ with all the entries that are $t$, we have that 
\[
\PP_{\gamma}(t;\hp(\gamma)<t) = \sum_{ F \subset \gamma\setminus \langle \mathsf{hp}(\gamma) \rangle} \PP_{\gamma}(t;\gamma\setminus F<t,F=t),
\]
where the sum is over order ideals $F$ of $\gamma \setminus \langle \hp(\gamma)\rangle$ and $\PP_{\gamma}(t;\hp<t;F=t)$ is the number of plane partitions of shape $\gamma$ with entries $0,\ldots,t$ where the entries in $F$ have value $t$ and the rest of the entries in $\gamma\setminus F$ have values less than $t$. This quantity is equal to the number  of plane partitions of shape $\gamma\setminus  F$ with entries $0,\ldots,t-1$, which is counted by $\Omega(\gamma\setminus F;t)$. That is, $\PP_{\gamma}(t;\gamma\setminus F<t; F=t)\,=\, \Omega(\gamma \setminus F;t)$, see Figure~\ref{fig:decomp ribbons removing zeros} for an illustration of this reduction. 
\begin{figure}[ht]
\includegraphics{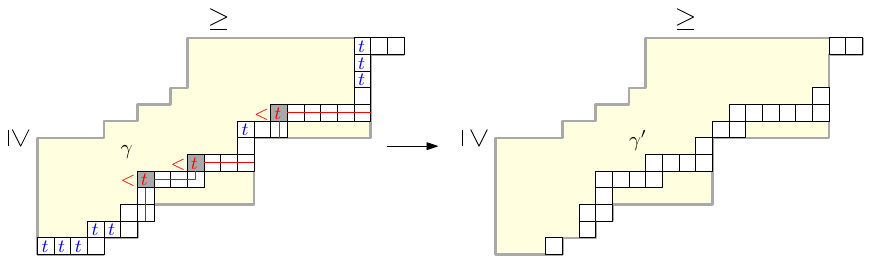}
\caption{Illustration of decomposition of plane partitions of ribbon shape $\gamma$ with entries $0,\ldots,t$ such that cells in high peaks, and the order ideal they generate have values $<t$ (\textcolor{red}{red}) and cells on an order filter $F$ have values $t$ (\textcolor{blue}{blue}) as a plane partition of (maybe disconnected) ribbon shape $\gamma\setminus F$ with entries $0,\ldots,t-1$.}
\label{fig:decomp ribbons removing zeros}
\end{figure}

By Theorem~\ref{coro:fences-order-positive-main}, this is a polynomial with nonnegative coefficients since $\gamma \setminus F$ is a border strip (fence).
\end{proof}

As a consequence of Theorem~\ref{thm: ehr +- fence is nonnegative } and its proof, we obtain the following formula for the polynomial $\ehr^{\pm}(\mathscr{P}(\M(\gamma)),t)$ as a positive sum of polynomials with nonnegative coefficients and prove Theorem~\ref{thm:main}.

\begin{theorem}\label{thm:formula-for-LPM}
    The Ehrhart polynomial of a lattice path matroid polytope $\mathscr{P}(\M(\lambda/\mu))$ can be written as follows:
\begin{equation} \label{eq:ehr LPM in terms of order filters}
\ehr(\mathscr{P}(\M(\lambda/\mu)),t) \,=\, \Omega(\gamma_{\min};t+1) + \sum_{\substack{\gamma \in \mathcal{L}(\lambda/\mu)\\ \gamma \neq \gamma_{min}}} \sum_{F\subset \gamma \setminus \langle \hp(\gamma)\rangle} \Omega(\gamma \setminus F; t),
\end{equation}
where the sum is over order filters $F$ of $\gamma \setminus \langle \hp(\gamma)\rangle$. In particular, since all the summands have nonnegative coefficients, the coefficients of $\ehr(\mathscr{P}(\M(\lambda/\mu)),t)$ are all positive. 
\end{theorem}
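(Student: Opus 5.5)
We obtain the formula by assembling the pieces already in place. By Theorem~\ref{thm: 2nd reformulation Ehrh LPMs}, $\ehr(\mathscr{P}(\M(\lambda/\mu)),t)$ is the sum over $\gamma\in\mathcal{L}(\lambda/\mu)$ of $\ehr^{\pm}(\mathscr{P}(\M(\gamma)),t)$. We separate the term $\gamma=\gamma_{\min}$ from the others. For $\gamma\neq\gamma_{\min}$, Theorem~\ref{thm: ehr +- fence is nonnegative } gives exactly the inner sum $\sum_{F}\Omega(\gamma\setminus F;t)$ over order filters $F$ of $\gamma\setminus\langle\hp(\gamma)\rangle$. This produces the second part of \eqref{eq:ehr LPM in terms of order filters}.

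For $\gamma_{\min}$ we note that, by definition, no peak of $\gamma_{\min}$ is a high peak, so $\hp(\gamma_{\min})=\varnothing$. The only subset is $S=\varnothing$, and $\varphi(\gamma_{\min},\varnothing)=\gamma_{\min}$. Hence $\ehr^{\pm}(\mathscr{P}(\M(\gamma_{\min})),t)=\ehr(\mathscr{P}(\M(\gamma_{\min})),t)$, the Ehrhart polynomial of a genuine snake matroid. Theorem~\ref{coro:fences-order-positive-main} with $t$ replaced by $t+1$ then gives $\Omega(\gamma_{\min};t+1)$. This is the same as $\PP_{\gamma_{\min}}(t)$, consistent with the counting of plane partitions with entries $0,\dots,t$ used in the proof of Theorem~\ref{thm: ehr +- fence is nonnegative }.

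Positivity comes next. Every $\gamma\setminus F$ is a (possibly disconnected) ribbon. Its order polynomial is the product of the order polynomials of its connected components, and each factor has nonnegative coefficients by Theorem~\ref{coro:fences-order-positive-main}. Hence each $\Omega(\gamma\setminus F;t)$ has nonnegative coefficients. We also need $\Omega(\gamma_{\min};t+1)$ to have nonnegative coefficients in $t$. This holds because $\Omega(\gamma_{\min};t+1)=\ehr(\mathscr{P}(\M(\gamma_{\min})),t)$ is itself Ehrhart positive by the same theorem; equivalently, substituting $t\mapsto t+1$ into a polynomial with nonnegative coefficients preserves nonnegativity. Summing gives nonnegative coefficients for $\ehr(\mathscr{P}(\M(\lambda/\mu)),t)$, and by multiplicativity over direct sums the same holds for disconnected lattice path matroids.

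The main obstacle is upgrading \emph{nonnegative} to \emph{positive}, as the statement claims. I would argue this from the $\gamma_{\min}$ term alone. I would show that $\Omega(\theta;t+1)$ has strictly positive coefficients in degrees $0,\dots,|\theta|$ for any connected ribbon $\theta$. Its degree is $|\theta|=n-1=\dim\mathscr{P}(\M)$, since $\M$ is connected. One route is to compare with an antichain or chain lower bound coefficientwise. The more natural route is the stronger statement from \cite{ferroni-morales-panova}, that snake Ehrhart polynomials expand positively in $(t-1)^m$, i.e.\ that $\Omega(\theta;t)$ has positive coefficients. If the cited result only yields nonnegativity, I would fall back on a direct argument: the constant term is $1$, the leading term is positive, and the intermediate coefficients are sums over nonempty families of positive contributions in the fence recursion of the prequel.
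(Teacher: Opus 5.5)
Your proposal is correct and follows the paper's argument: split the sum in Theorem~\ref{thm: 2nd reformulation Ehrh LPMs} into the $\gamma_{\min}$ term, which is $\ehr(\mathscr{P}(\M(\gamma_{\min})),t)=\Omega(\gamma_{\min};t+1)$ because $\hp(\gamma_{\min})=\varnothing$, and the remaining terms given by Theorem~\ref{thm: ehr +- fence is nonnegative }, with each summand nonnegative by Theorem~\ref{coro:fences-order-positive-main} applied componentwise. Your last paragraph is unnecessary, since ``positive'' here means nonnegative in the paper's sense of Ehrhart positivity, and if you do want strict positivity it needs no fence recursion: writing $\Omega(\gamma_{\min};t)=\sum_m c_m t^m$ with all $c_m\geq 0$ and $c_{n-1}>0$, the coefficient of $t^j$ in $\Omega(\gamma_{\min};t+1)$ is $\sum_m c_m\binom{m}{j}\geq c_{n-1}\binom{n-1}{j}>0$ for $0\leq j\leq n-1$ (note also that $\Omega(\theta;t)$ itself has zero constant term, so it does not have strictly positive coefficients).
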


\begin{example}
 For the lattice path matroid $\M$ associated to $\lambda/\mu=433/1$, depending on the tails of the paths in $\mathcal{L}(\lambda/\mu)$, each of the paths $\gamma$ in Figure~\ref{fig: delannoy paths} will correspond to $1$, $6$, $3$, $2$, $1$ border strips $\gamma\setminus F$, respectively. See Figure~\ref{fig: positive subpaths} for a list of all the border strips associated to each path $\gamma$. 
 
 If we add the contributions of the border strips of each path $\gamma$, we obtain $\ehr^{\pm}(\mathscr{P}(\M(\gamma)),t)$. For instance, the path $\gamma_3$ corresponds to three border strips $\gamma_3 \setminus F$ and if we add their corresponding Ehrhart polynomials we obtain
\begin{align*}
\ehr^{\pm}(\mathscr{P}(\M(\gamma_3)),t) &= \Omega(431/2;t) + \Omega(331/2;t) + \Omega(31;t)\\
&=\PP_{431/2}(t-1) + \PP_{431/3}(t-1)+\PP_{31}(t-1)\\
&= \left(\frac{1}{18} t^{6} + \frac{7}{30} t^{5} + \frac{25}{72} t^{4} + \frac{1}{4} t^{3} + \frac{7}{72} t^{2} + \frac{1}{60} t\right)+\\&+\left(\frac{1}{8} t^{5} + \frac{5}{12} t^{4} + \frac{3}{8} t^{3} + \frac{1}{12} t^{2}\right) + \left(\frac{1}{8} t^{4} + \frac{5}{12} t^{3} + \frac{3}{8} t^{2} + \frac{1}{12} t\right)\\
&= \frac{1}{18} t^{6} + \frac{43}{120} t^{5} + \frac{8}{9} t^{4} + \frac{25}{24} t^{3} + \frac{5}{9} t^{2} + \frac{1}{10} t.
\end{align*}
 
\end{example}

\begin{figure}[ht]
    \includegraphics[scale=1.4]{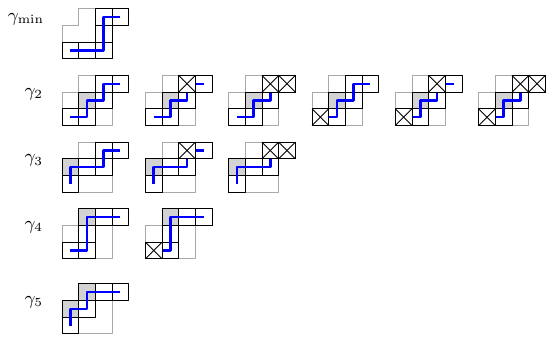}
    \caption{The sub-border strips that contribute in the positive formula for $\mathscr{P}(\M(\gamma))),t)$ for $\lambda/\mu=433/1$. The crossed cells correspond to the removed elements in the order filter $F$.}    \label{fig: positive subpaths}

\end{figure}

\begin{remark}
The formula in \eqref{eq:ehr LPM in terms of order filters} gives the Ehrhart polynomial of a lattice path matroid polytope as a sum of order polynomials of (not necessarily connected) fence posets. However, there is an asymmetry with the contribution from the $\gamma_{min}$ being evaluated at $t+1$ instead of $t$. As pointed by Kahane \cite[\S 6]{KahaneStat}, this assymetry can be resolved since by the ``coproduct formula" for order polynomials (see \cite[Ex.~1.2.4(d)]{kung-yan}) we have that 
\(
\Omega(\gamma_{\min};t+1) = \sum_{F \subset \gamma_{\min}} \Omega(\gamma\setminus F;t).
\)
Therefore, we can rewrite \eqref{eq:ehr LPM in terms of order filters} as
\begin{equation} \label{eq:ehr LPM in terms of order filters symmetric}
\ehr(\mathscr{P}(\M(\lambda/\mu)),t) \,=\, \sum_{\gamma \in \mathcal{L}(\lambda/\mu)} \sum_{F\subset \gamma \setminus \langle \hp(\gamma)\rangle} \Omega(\gamma \setminus F; t),
\end{equation}
\end{remark}

Recently, in \cite[Prop. 6.1]{KahaneStat} Kahane took \eqref{eq:ehr LPM in terms of order filters symmetric} and realized that the resulting fences in the expression are in correspondence with vertices of the original lattice path matroid polytope as follows. Given a lattice path $P$ lying above $L$ and below $U$, let $\sh(P)$ be the cells of $\lambda/\mu$ whose NorthWest corners touch $P$ called the {\em shade} of $P$. The cells $\sh(P)$ form a (not necessarily connected) border strip inside $\lambda$. See Example~\ref{ex:kahane reformulation}.

\begin{corollary}[{Kahane \cite{KahaneStat}}] \label{cor:Kahane reformulation Ehrhart LPM}
The Ehrhart polynomial of a lattice path matroid polytope $\mathscr{P}(\M[L,U])$ can be written as follows:
\[
\ehr(\mathscr{P}(\M[L,U],t)  \,=\, \sum_{P} \Omega(\sh(P);t),
\]
where the sum is over lattice paths $P$ lying above $L$ and below $U$.
\end{corollary}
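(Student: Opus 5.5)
We will derive the statement from the symmetric formula \eqref{eq:ehr LPM in terms of order filters symmetric}. That formula writes $\ehr(\mathscr{P}(\M[L,U]),t)$ as a sum of $\Omega(\gamma\setminus F;t)$ over pairs $(\gamma,F)$. Here $\gamma\in\mathcal{L}(\lambda/\mu)$, and $F$ is an order filter of $\gamma\setminus\langle\hp(\gamma)\rangle$ (for $\gamma_{\min}$ we have $\hp(\gamma_{\min})=\varnothing$, so $F$ ranges over all filters of $\gamma_{\min}$). Therefore it suffices to build a bijection $\Psi$ from this set of pairs to the lattice paths $P$ lying between $L$ and $U$, such that $\sh(P)$ and $\gamma\setminus F$ are isomorphic as posets, i.e. the same ribbon up to translation of components. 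Since the order polynomial depends only on the poset, the corollary then follows term by term.

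\textbf{Construction of $\Psi$.} A path $\gamma\in\mathcal{L}(\lambda/\mu)$ is a ribbon of cells from the lower-left cell to the upper-right cell. Its upper filters are unions of terminal segments at the local maxima of the fence. The high peaks and the ideal they generate must stay. The removable cells therefore form segments hanging off the ends of $\gamma$ and off the non-high valleys/peaks, in the up-right direction. Guided by Figure~\ref{fig:decomp ribbons removing zeros}, I would define $P$ as follows. Take the lattice path whose northwest corners trace the cells of $\gamma$, i.e. the path with $\sh(P)=\gamma$ when $F=\varnothing$. Then, for each removed segment of $F$, push the corresponding portion of $P$ one unit toward the upper boundary, so that the pushed part no longer has those cells in its shade. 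Concretely, the NW corners of the cells of $\gamma$ determine a path $P_\gamma$ shifted by $(-1,+1)$ relative to the cell centers. I will check that $P_\gamma$ lies between $L$ and $U$ precisely because $\gamma$ stays inside $\lambda/\mu$. Removing a filter cell $c$ corresponds to replacing a corner NE of $P$ at $c$ by EN (or the reverse), which deletes $c$ from the shade without changing the rest. The next step is to check that the result still lies below $U$. The key point is that cells in $\langle\hp(\gamma)\rangle$ cannot be removed: removing a high peak would require a step crossing $U$, or would produce a path equal to $P_{\gamma'}$ for a different $\gamma'$. This is what makes the map injective.

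\textbf{Inverse map.} Given $P$, read off its shade $\sh(P)$, a possibly disconnected ribbon. Reconstruct $\gamma$ as the unique path in $\mathcal{L}(\lambda/\mu)$ obtained by filling each gap between consecutive components with the minimal connecting cells, taking the $\gamma_{\min}$-like choice ``east whenever possible''. Then set $F=\gamma\setminus\sh(P)$. I would verify that $F$ is a filter avoiding $\langle\hp(\gamma)\rangle$ using the same local corner analysis. Counting gives a sanity check: both sides have size $\#\mathscr{B}[L,U]$ at $t=0$, since $\Omega(\cdot;0)$ vanishes unless the poset is empty. Matching the $t=1$ values, which count pairs versus vertices, is a further check. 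I would also verify everything on the example $433/1$, where the path counts $1,6,3,2,1$ should total the number of bases.

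\textbf{Main obstacle.} The hard step is the exact local description of why filters meeting $\langle\hp(\gamma)\rangle$ are excluded, i.e. why every $P$ arises exactly once rather than twice (from $\gamma$ and from a neighbouring $\gamma'$ obtained by flipping a peak). I would first try to prove it via the canonical reconstruction of $\gamma$ from $P$: the gaps in $\sh(P)$ are filled greedily, and a high peak is precisely a place where the greedy filling and $\gamma$ differ. If this local argument gets messy, a fallback is to compare both sides as functions of $t$ by induction on $|\lambda/\mu|$, removing the upper-right cell. The subdivision recursion of Theorem~\ref{thm:lpm-from-snakes} and the deletion/contraction behaviour of lattice paths ending in N or E give matching recursions.
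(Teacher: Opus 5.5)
Your reduction is the right one. The paper, following Kahane, obtains the corollary from \eqref{eq:ehr LPM in terms of order filters symmetric} by matching the pairs $(\gamma,F)$ with lattice paths $P$ so that $\sh(P)=\gamma\setminus F$, in fact literally as sets of cells. However, your $\Psi$ pushes in the wrong direction, and the step you call the main obstacle is exactly the missing idea.

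Let $v$ be a lattice point between $L$ and $U$. The cell with NW corner $v$ is automatically below $U$, and one checks that it lies in $\lambda/\mu$ if and only if $v\notin L$. Hence $\sh(P)$ is the set of cells SE of the vertices of $P$ that are off $L$. The gaps of $\sh(P)$ are exactly the stretches where $P$ runs along $L$, and $P$ can be recovered from $\sh(P)$. Moving vertices toward $U$ only takes them off $L$, so it can never shrink the shade; you must push onto $L$.

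Your flip NE $\to$ EN at the NW corner $v$ of a cell $c$ removes $c$ without creating a new shade cell only when $v+(1,-1)$, the SE corner of $c$, lies on $L$. That happens only when $c$ is a cell of $\gamma_{\min}$; these are precisely the cells of $\lambda/\mu$ whose SE corner is on $L$. The flip also applies only at NE corners, so it cannot be performed on the cells of $F$ in arbitrary order. Finally, high peaks are not excluded because of $U$. Flipping at a high peak $h$ keeps the path inside the region but trades $h$ for its SE diagonal neighbour, which lies in $\lambda/\mu$ because $h$ is high. The result is $P_{\gamma'}$, where $\gamma'$ is $\gamma$ with that peak turned into a valley.

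What you need is the lemma: \emph{each connected component of an order filter $F$ of $\gamma$ with $F\cap\hp(\gamma)=\varnothing$ is a segment of $\gamma_{\min}$.} The proof goes as follows.
\begin{itemize}
\item The $\gamma_{\min}$-cells are those whose SE diagonal neighbour is not in $\lambda$, and this set is closed under moving right or down.
\item An interior component of $F$ is entered by an N step and left by an E step. So every cell of it lies weakly SE of a peak of $\gamma$ contained in the component.
\item Such a peak is low, hence a $\gamma_{\min}$-cell, and therefore so is every cell of the component.
\item For the two end components, use also that the first and last cells of $\gamma$ lie in $\gamma_{\min}$.
\end{itemize}
With the lemma, define $\Psi(\gamma,F)$ as $P_\gamma$ with the NW corners of the cells of $F$ translated by $(1,-1)$. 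These corners land on $L$, so $\sh(\Psi(\gamma,F))=\gamma\setminus F$. The inverse fills each gap of $\sh(P)$ by the $\gamma_{\min}$ segment between its endpoints, and the two ends by initial and final pieces of $\gamma_{\min}$. Each gap starts after a cell ending its row and ends at a cell at the bottom of its column. The lemma is exactly the uniqueness of this filling.

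Your sanity checks also need correcting:
\begin{itemize}
\item At $t=0$ both sides equal $1$, since only $P=L$ has empty shade. It is at $t=1$ that both sides count bases.
\item For $433/1$, the symmetric formula gives $17$ pairs for $\gamma_{\min}$, one per order filter. The total is $17+6+3+2+1=29=\#\mathscr{B}[L,U]$, not $1+6+3+2+1$.
\item The deletion/contraction fallback is not available. Ehrhart polynomials of matroid polytopes satisfy no such recursion: for $\mathsf{U}_{1,2}$ it would give $2$ instead of $t+1$.
\end{itemize}
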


\begin{example} \label{ex:kahane reformulation}
For the uniform matroid $\mathsf{U}_{2,4}$, which corresponds to the lattice path matroid of the shape $\lambda=(2,2)$, or equivalently, $\M[L,U]$ where $L=EENN$ and $U=NNEE$, the six paths $P$ above $L$ and below $U$ and their respective shades (highlighted in blue) are :
\begin{center}
\includegraphics[scale=0.7]{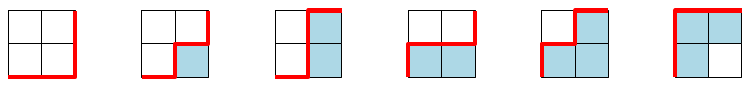} 
\end{center}
These shades form the border strips $22/22, 22/21, 22/11, 22/2, 22/1, 21$, respectively, thus
\[
\ehr(\mathscr{P}(\mathsf{U}_{2,4}),t) =  1 + \Omega(22/21;t) +  \Omega(22/11;t) +\Omega(22/2;t) + \Omega(22/1;t)+ \Omega(21;t).
\]
\end{example}

\section{Refinements and special cases}

\subsection{Combinatorial rule for the coefficients} 

Our proof of the Ehrhart positivity of the matroid base polytope of a lattice path matroid $\M(L,U)$ with ground set $[n]$, relied on expressing the Ehrhart polynomial as a sum of order polynomials of fence posets (see \eqref{eq:ehr LPM in terms of order filters}) which each are polynomials with nonnegative coefficients. The original proof of the positivity of $|\gamma|!\cdot \Omega(\gamma;t)$ in \cite{ferroni-morales-panova} was recursive and did not give a combinatorial interpretation for the coefficients of this normalized order polynomial though such an interpretation was recently found independently by Kahane \cite{Kahane} \cite{KahaneStat} and Huang \cite{huang-stat}. It would be interesting to find a combinatorial interpretation for the coefficients of $n!\cdot \ehr(\mathscr{P}(\M[L,U],t) \in \mathbb{N}[t]$. Note, that even for the hypersimplex $\Delta_{k,n}$, this is stated as an open problem in \cite[Exercise 4.62(h)]{EC2}. Recently, after the first version of this article was posted,  in \cite{KahaneStat} Kahane used his block statistic for the order polynomials of fences and his reformulation (see Corollary~\ref{cor:Kahane reformulation Ehrhart LPM}) of \eqref{eq:ehr LPM in terms of order filters} go give a combinatorial interpretation for the coefficients of $n!\cdot \ehr(\mathscr{P}M[L,U],t)$.

\subsection{Coefficient-wise inequality between lattice path matroids}

In \cite[Conjecture~1.5]{ferroni_hooks}, Ferroni conjectured  that a connected matroid $\M$ of rank $k$ on $n$ elements has an Ehrhart polynomial whose sequence of coefficients is entrywise bigger than the Ehrhart polynomial of the so-called minimal matroid, and entrywise smaller than the Ehrhart polynomial of the uniform matroid (in both cases, of the same size and rank as $\M$). The lower bound part was disproved by the same examples of matroids in \cite{ferroni} that are not Ehrhart positive, but the upper bound part is still conjectured to be true. The following implies that both the lower and upper bound parts of the conjecture hold for all lattice path matroids.

\begin{theorem}
Consider skew shapes $\eta/\nu \subseteq \lambda/\mu  \subseteq (n-k)^k$, then \[
 \ehr(\mathscr{P}(\M(\eta/\nu)),t)\,\preceq\, \ehr(\mathscr{P}(\M(\lambda/\mu)),t),\]
where the symbol $\preceq$ stands for coefficient-wise inequality.
\end{theorem}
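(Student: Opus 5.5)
The plan is to use Kahane's reformulation, Corollary~\ref{cor:Kahane reformulation Ehrhart LPM}, which writes the Ehrhart polynomial of a lattice path matroid as a sum over lattice paths $P$ between the lower and upper paths of $\Omega(\sh(P);t)$. Each summand is the order polynomial of a (possibly disconnected) fence, so it has nonnegative coefficients by Theorem~\ref{coro:fences-order-positive-main}. We may not want to rely on Kahane's corollary, since its proof is only sketched here. In that case we run the same argument with \eqref{eq:ehr LPM in terms of order filters symmetric}, where the index set is the set of pairs $(\gamma,F)$.

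The argument is an injection-plus-monotonicity scheme. Both shapes sit inside the same rectangle $(n-k)^k$, so the matroids have the same $n$ and $k$. The inclusion $\eta/\nu\subseteq\lambda/\mu$ means that the region between the paths $L',U'$ for $\eta/\nu$ is contained in the region between $L,U$ for $\lambda/\mu$. Concretely, $L$ lies below $L'$ and $U'$ lies below $U$. Hence every lattice path $P$ counted for $\eta/\nu$ is also counted for $\lambda/\mu$; the index set for the smaller shape injects into that of the larger one.

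Next, the fences must be compared. A priori the shade $\sh(P)$ is defined relative to the ambient shape. Here we must check that the shade of $P$ computed in $\eta/\nu$ is a subposet, indeed an induced sub-ribbon, of the shade computed in $\lambda/\mu$; it is just the intersection with $\eta/\nu$. It then suffices to prove the following.

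\emph{Key lemma.} If $\rho'\subseteq\rho$ are (possibly disconnected) ribbons with $\rho'$ obtained by deleting cells, then $\Omega(\rho';t)\preceq\Omega(\rho;t)$ coefficientwise.

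Granting the lemma, summing over the injected index set and adding the extra nonnegative terms indexed by paths $P$ lying only in $\lambda/\mu$ gives the claimed inequality $\preceq$.

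The key lemma is the main obstacle. Deleting a cell $x$ from a ribbon: if $x$ is an end cell, I would use the recursion from \cite{ferroni-morales-panova}, of the kind used to prove positivity, to express $\Omega(\rho;t)-\Omega(\rho\setminus x;t)$ in terms of fences. If $x$ is interior, deleting it splits the ribbon into a product of two fences, which is a ribbon with one fewer cell. I would try to write $\Omega(\rho;t)$ as a sum over the value of $x$, or via the coproduct formula $\Omega(P;t+1)=\sum_F\Omega(P\setminus F;t)$, so that $\Omega(\rho';t)$ appears as one summand and the rest are order polynomials of fences.

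If a clean difference formula is unavailable, I would fall back on the combinatorial interpretation of the coefficients by Kahane/Huang. The goal there is an injection of the statistic-weighted objects for $\rho'$ into those for $\rho$. A single-cell removal then suffices by iterating, since one can pass from $\sh_{\eta/\nu}(P)$ to $\sh_{\lambda/\mu}(P)$ by adding cells one at a time.
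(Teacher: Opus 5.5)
\textbf{The Key Lemma is false.} Your term-by-term comparison therefore collapses. Already $\Omega(\varnothing;t)=1\not\preceq t=\Omega(\square;t)$. A single cell inside a domino gives $t\not\preceq\binom{t+1}{2}=\tfrac12t^2+\tfrac12t$, since the linear coefficient of the order polynomial of an $m$-chain is $1/m$.

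This is not an edge case. Take $\eta/\nu=(2,1)\subseteq\lambda/\mu=(2,2)$, so $L'=\mathrm{ENEN}$ and $L=\mathrm{EENN}$. For the common paths $\mathrm{ENEN},\mathrm{NEEN},\mathrm{ENNE},\mathrm{NENE}$:
\begin{itemize}
\item in $(2,1)$ the shades contribute $1,\ t,\ t,\ t^2$;
\item in $(2,2)$ they contribute $t,\ \binom{t+1}{2},\ \binom{t+1}{2},\ \tfrac13t^3+\tfrac12t^2+\tfrac16t$.
\end{itemize}
All four comparisons fail, even though the totals differ by $\Omega(21;t)\succeq 0$.

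The failure is in fact forced whenever $L'\neq L$. The lower path is the only path with empty shade. So for the small shape the constant term $1$ is carried by $P=L'$. But $\sh_{\lambda/\mu}(L')\neq\varnothing$, so $\Omega(\sh_{\lambda/\mu}(L');t)$ has zero constant term. No recursion from the prequel and no Kahane--Huang injection can prove a false inequality. Your fallback to \eqref{eq:ehr LPM in terms of order filters symmetric} fails the same way when $\lambda$ changes. There $\hp(\gamma)$ grows with $\lambda$, so the smaller shape has more filters $F$ per path, not fewer.

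\textbf{The missing idea is to never compare across a change of the lower boundary.} Suppose $\eta=\lambda$, so $L'=L$ and $\mu\subseteq\nu$. A cell whose north-west corner lies on $P$ is automatically weakly below $U'$ as soon as $P$ is. Hence $\sh(P)$ depends only on $P$ and $L$. The common terms in Kahane's formula are then identical. The difference is $\sum\Omega(\sh(P);t)$ over the extra paths, which is $\succeq 0$ with no monotonicity lemma at all.

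The case $\nu=\mu$ reduces to this one by a $180^\circ$ rotation. Reversing the ground set exchanges the roles of the lower and upper paths and preserves the Ehrhart polynomial. The general case then follows from the chain $\eta/\nu\subseteq\lambda/\nu\subseteq\lambda/\mu$.

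This is exactly the paper's argument, phrased there through Theorem~\ref{thm: 2nd reformulation Ehrh LPMs}. With $\lambda$ fixed, $\gamma_{\min}$ and $\hp(\gamma)$ do not change. So $\mathcal{L}(\eta/\nu)\subseteq\mathcal{L}(\lambda/\mu)$ with identical $\ehr^{\pm}$ summands. The leftover summands all have $\gamma\neq\gamma_{\min}$, so they are nonnegative by the plane-partition argument.
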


\begin{proof}
It suffices to show the result for the case when $\lambda=\eta$ and $\nu\subset \mu$ Indeed, by reversing the lower and upper path of the lattice path matroid, the result follows for the case when $\mu=\nu$ and $\eta \subseteq \lambda$. The general result is then obtained by combining these two cases.

Note that when $\lambda=\eta$ and $\nu\subset \mu$ then $\mathcal{L}(\eta/\nu)\subset \mathcal{L}(\lambda/\mu)$ and moreover the high peaks of $\gamma$ in $\mathcal{L}(\eta/\nu)$ are the same as the excited peaks of $\gamma \in \mathcal{L}(\eta/\nu)$. Thus, by Theorem~\ref{thm: 2nd reformulation Ehrh LPMs} we have that 
\[
\ehr(\mathscr{P}(\M(\lambda/\mu)),t) - \ehr(\mathscr{P}(\M(\eta/\nu))  \,=\, \sum_{\gamma \in \mathcal{L}(\lambda/\mu)\setminus \mathcal{L}(\eta/\nu)} \ehr^{\pm}(\mathscr{P}(\M(\gamma)),t).
\]
The result then follows since by Theorem~\ref{thm: ehr +- fence is nonnegative }, each of the summands on the RHS is a polynomial with nonnegative coefficients.
\end{proof}

If one applies the preceding result to the special case of a direct sum of two uniform matroids $\mathsf{U}_{k-r,n-h}\oplus \mathsf{U}_{r,h}$ whose skew shape is contained in the corresponding skew shape of the \emph{cuspidal matroid} $\mathsf{\Lambda}_{r,k,h,n}$ (see \cite[Definition~3.21]{ferroni-schroter}), one obtains that 
    \[ \ehr(\mathscr{P}(\mathsf{U}_{k-r,n-h}),t) \cdot \ehr(\mathscr{P}(\mathsf{U}_{r,h}),t) \preceq \ehr(\mathscr{P}(\mathsf{\Lambda}_{r,k,h,n}),t).  \]

This allows us to use \cite[Theorem~1.5]{ferroni-schroter} to deduce the following result on elementary split matroids, which generalizes another main result of \cite{deligeorgaki-mcginnis-vindas} (note that elementary split matroids are a larger class than paving matroids).

\begin{corollary}
    If $\M$ is an elementary split matroid of rank $k$ on $n$ elements then:
    \[ \ehr(\mathscr{P}(\M), t) \preceq \ehr(\mathscr{P}(\mathsf{U}_{k,n}),t).\]
\end{corollary}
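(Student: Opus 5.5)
The plan is to combine the coefficient-wise comparison theorem just proved with the cited result \cite[Theorem~1.5]{ferroni-schroter}. That result expresses the Ehrhart polynomial (indeed any valuative invariant) of an elementary split matroid $\M$ of rank $k$ on $n$ elements in terms of the uniform matroid $\mathsf{U}_{k,n}$ and its stressed hyperplanes. Concretely, there are nonnegative integers $\lambda_{r,h}$ (the number of stressed subsets of size $h$ and rank $r$ in the relevant sense) such that
\[
\ehr(\mathscr{P}(\M),t) \,=\, \ehr(\mathscr{P}(\mathsf{U}_{k,n}),t) - \sum_{r,h} \lambda_{r,h}\Bigl(\ehr(\mathscr{P}(\mathsf{\Lambda}_{r,k,h,n}),t) - \ehr(\mathscr{P}(\mathsf{U}_{k-r,n-h}\oplus \mathsf{U}_{r,h}),t)\Bigr).
\]
This comes from the relaxation formula: relaxing a stressed subset replaces a cuspidal matroid face by the direct sum face.

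The key step is to show that each bracketed difference has nonnegative coefficients. For this I use the observation stated just before the corollary. Both $\mathsf{U}_{k-r,n-h}\oplus \mathsf{U}_{r,h}$ and $\mathsf{\Lambda}_{r,k,h,n}$ are lattice path matroids. The skew shape of the direct sum consists of two rectangles meeting at a corner, and it is contained in the skew shape of the cuspidal matroid, which fills in more of the same bounding rectangle $(n-k)^k$. The preceding theorem then gives $\ehr(\mathscr{P}(\mathsf{U}_{k-r,n-h}),t)\cdot\ehr(\mathscr{P}(\mathsf{U}_{r,h}),t) \preceq \ehr(\mathscr{P}(\mathsf{\Lambda}_{r,k,h,n}),t)$. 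Multiplicativity of Ehrhart polynomials under direct sums identifies the left side with the direct sum term. The preceding theorem was stated for connected shapes. The direct sum shape is disconnected, but the grouping argument via Theorem~\ref{thm: 2nd reformulation Ehrh LPMs} extends to this case by treating the shape componentwise. Alternatively, I can nest the shapes through a connected intermediate, or note that the proof only compares $\mathcal{L}$-sets. I will check this point explicitly.

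Subtracting a nonnegative combination of coefficient-wise nonnegative polynomials from $\ehr(\mathscr{P}(\mathsf{U}_{k,n}),t)$ yields $\ehr(\mathscr{P}(\M),t)\preceq \ehr(\mathscr{P}(\mathsf{U}_{k,n}),t)$.

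The main obstacle is getting the exact form and sign conventions of \cite[Theorem~1.5]{ferroni-schroter} right. I need to confirm that the multiplicities $\lambda_{r,h}$ are nonnegative and that the correction term is precisely cuspidal minus direct sum, rather than the reverse. I also need to confirm that the cuspidal matroid's shape genuinely contains the direct sum's shape inside the same $(n-k)^k$ box. I would verify the latter by writing both as $\M[L,U]$: they share the upper path, and the cuspidal lower path lies weakly below the lower path of the direct sum.
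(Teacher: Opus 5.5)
This is the paper's own argument: it also combines \cite[Theorem~1.5]{ferroni-schroter} with the inequality $\ehr(\mathscr{P}(\mathsf{U}_{k-r,n-h}),t)\cdot\ehr(\mathscr{P}(\mathsf{U}_{r,h}),t)\preceq\ehr(\mathscr{P}(\mathsf{\Lambda}_{r,k,h,n}),t)$, which it gets by applying the comparison theorem for nested skew shapes to the direct-sum shape sitting inside the cuspidal shape. The disconnectedness you flag is a real point that the paper passes over, and your ``only compares $\mathcal{L}$-sets'' remedy fails because two rectangles meeting at a corner carry no north-east path; your componentwise remedy does work, though: reverse the ground set if needed so that both shapes share the lower path, join $\gamma_1$ and $\gamma_2$ through the cell $c$ just northwest of the kitty corner (a peak of the cuspidal $\gamma_{\min}$, hence not a high peak), and then $\ehr^{\pm}(\gamma_1)\,\ehr^{\pm}(\gamma_2)$ is exactly the $c=t$ part of $\ehr^{\pm}(\gamma_1c\gamma_2)=\PP_{\gamma_1c\gamma_2}(t;\hp<t)$, while the $c<t$ part is a sum of order polynomials $\Omega(\gamma_1c\gamma_2\setminus F;t)$ by the paper's order-filter argument.
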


The above result supports the upper bound part of \cite[Conjecture~1.5]{ferroni_hooks}. Even though the lower bound part of that conjecture has been disproved by the examples in \cite{ferroni}, we still believe that the upper bound part holds for all matroids.

\subsection{The hypersimplex}

By specializing Theorem~\ref{thm:formula-for-LPM} for the uniform matroid $\mathsf{U}_{k,n}$ of rank $k$ and size $n$, which corresponds to the lattice path matroid of the shape $\lambda=(n-k)^k$ we have the following explicit formula.

\begin{proposition}\label{prop:uniform}
For $\lambda = (n-k)^k$ we have that
\begin{equation} \label{eq: equation Ehrhart uniform matroid}
\ehr(\mathscr{P}(\M(\lambda)),t) \,=\, \Omega(\gamma_{\min};t+1)+ \sum_{\gamma:(k,1) \to (1,n-k), \gamma \neq \gamma_{\min}} \sum_{i,j} \Omega(\gamma(i,j);t),
\end{equation}
where $\gamma(i,j)$ is the subpath of $\gamma$ obtained by removing $i$ initial east steps and $j$ final north steps in the case when $\gamma$ starts with at least $i$ east steps and ends with at least $j$ north steps.
\end{proposition}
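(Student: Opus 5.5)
This is a direct specialization of Theorem~\ref{thm:formula-for-LPM} to $\lambda/\mu=(n-k)^k/\varnothing$. The plan has three steps: identify $\mathcal{L}(\lambda)$ and $\gamma_{\min}$ for the rectangle, identify $\langle\hp(\gamma)\rangle$ for each $\gamma\neq\gamma_{\min}$, and classify the order filters $F$ of $\gamma\setminus\langle\hp(\gamma)\rangle$.

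\textbf{Paths and $\gamma_{\min}$.} For the rectangle there are no inner corners of $\mu$ to avoid. Hence $\mathcal{L}(\lambda)$ consists of all north-east lattice paths of cells from $(k,1)$ to $(1,n-k)$. The minimum path is $\nu=((n-k)-1)^{k-1}$ followed by $\mu_k=0$. So $\gamma_{\min}$ is the hook: the full bottom row followed by the last column. It has a single peak, at the corner cell $(k,n-k)$.

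\textbf{High peaks of $\gamma\neq\gamma_{\min}$.} For such $\gamma$, every peak differs from that corner, so $\hp(\gamma)$ is the set of all peaks of $\gamma$ except possibly $(k,n-k)$. Since $\gamma\neq\gamma_{\min}$, $\gamma$ cannot pass through $(k,n-k)$, and $\hp(\gamma)$ is exactly the set of all peaks of $\gamma$; this set is nonempty.

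Now view $\gamma$ as a fence poset, with the orientation fixed by plane partitions weakly decreasing along rows and columns. The order ideal generated by all peaks should be everything from the first peak to the last peak, together with the appropriate monotone stretches. I will check the orientation carefully, so that $\gamma\setminus\langle\hp(\gamma)\rangle$ turns out to be a disjoint union of two chains: the initial run of $a$ east steps and the final run of $b$ north steps, with everything between them lying in the ideal generated by the peaks.

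\textbf{Order filters.} An order filter $F$ of a disjoint union of two chains is the union of an up-closed segment of each chain. By the orientation, each up-closed segment is an extremal segment: the first $i$ cells of the initial east run and the last $j$ cells of the final north run, with $0\le i\le a$ and $0\le j\le b$. Removing $F$ from $\gamma$ gives exactly $\gamma(i,j)$. Substituting into \eqref{eq:ehr LPM in terms of order filters} then yields \eqref{eq: equation Ehrhart uniform matroid}.

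\textbf{Main obstacle.} The one delicate point is the orientation check: which end of each chain is its top. Concretely, I must verify that the cells adjacent to the first and last peak are forced below those peaks in the plane-partition order, while the extreme cells (the lower-left start and the upper-right end of $\gamma$) are maximal in the complementary chains. I would do this by comparing neighbouring entries in a row (left $\ge$ right) and in a column (top $\ge$ bottom) along the ribbon. As a sanity check, I would match this against the example in Figure~\ref{fig: positive subpaths}: there, $\gamma_3$ has three strips and $\gamma_2$ has six.
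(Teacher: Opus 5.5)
Your proposal is correct and follows the paper's route: the proposition is Theorem~\ref{thm:formula-for-LPM} specialized to the rectangle, where every peak of $\gamma\neq\gamma_{\min}$ is high, $\gamma\setminus\langle\hp(\gamma)\rangle$ is the disjoint union of the two chains you describe, and their order filters give exactly the $\gamma(i,j)$. The orientation check you defer is immediate, since along $\gamma$ a north step weakly increases the entry and an east step weakly decreases it, so the first and last cells of $\gamma$ are the tops of the two chains.

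One slip to fix: under the paper's convention a peak is an up step followed by a right step, i.e.\ a local maximum. So the hook $\gamma_{\min}$ has no peaks at all, and $(k,n-k)$ is a valley. This does not affect your conclusion that $\hp(\gamma)$ is the full, nonempty set of peaks of $\gamma$.
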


This result reproves the main result of Ferroni \cite{ferroni_hypersimplex}, the Ehrhart positivity of the \emph{ hypersimplex} $\Delta_{k,n}$, in a completely different way. The base polytope of $\mathsf{U}_{k,n}$ is commonly called the hypersimplex $\Delta_{k,n}$, and the Ehrhart positivity problem for these polytopes traces back to a problem originally listed as open in Stanley's book \cite[Problem~4.62]{ec1}. We note that after \cite{ferroni_hypersimplex}, other different proofs of this positivity have been found, notably by Hanely et al. \cite{hanely} and by McGinnis \cite{mcginnis}.

\begin{remark}
It would be interesting to relate the existing formulas for the Ehrhart coefficients of $\Delta_{k,n}$ with the ones that result from Proposition~\ref{prop:uniform}. For example, if we take the leading coefficient on both sides we recover, up to multiplying by $n!$, the classical identity for Eulerian numbers $A_{n,k+1}$ as the sum of all the numbers of permutations with a given set of $k$ descents. Note that in recent work, Kahane \cite{KahaneStat} and independently Huang \cite{huang-stat} have {\em the same} combinatorial rule for the coefficients of $|\gamma|!\cdot \PP_{\gamma}(t)$ for ribbon shapes $\gamma$.
\end{remark}

\begin{example}
For the uniform matroid $\mathsf{U}_{2,4}$, which corresponds to the lattice path matroid of the shape $\lambda=(2,2)$, the above formula  \eqref{eq: equation Ehrhart uniform matroid}  will have contributions from the paths $\gamma_{\min}$,  and  $(2,1)-(1,1)-(1,2)$. Thus
\begin{align*}
 \ehr(\mathscr{P}(\mathsf{U}_{2,4}),t) &= \PP_{22/1}(t) + \PP_{21}(t-1) \\
\frac23t^3 + 2t^2 + \frac73t + 1  &=
 \frac{(t+1)(t+2)(2t+3)}{6} + \frac{t(t+1)(2t+1)}{6}.
\end{align*}

\end{example}

\begin{example}
For the uniform matroid $\mathsf{U}_{3,6}$, which corresponds to the lattice path matroid of the shape $\lambda=(3,3,3)$, the above formula \eqref{eq: equation Ehrhart uniform matroid} will have contributions from six paths including $\gamma_{\min}$ as illustrated in Figure~\ref{fig:shape 333}. 
\begin{figure}[ht]
    \includegraphics[scale=1.4]{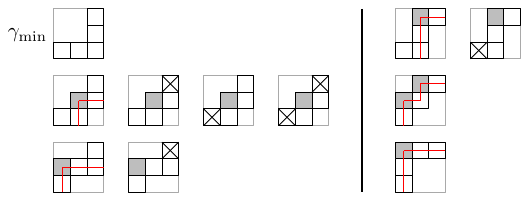}
    \caption{The border strips appearing in the formula for the Ehrhart polynomial of the uniform matroid $\mathsf{U}_{3,6}$. The crossed cells correspond to the removed elements in the order filter. The cells in \textcolor{red}{red} correspond to the elements of the order ideal generated by the high peaks of each lattice path.}
    \label{fig:shape 333}
\end{figure}
\begin{align*}
\ehr(\mathscr{P}(\mathsf{U}_{3,6}),t) &= \PP_{333/22}(t) + \left(\PP_{332/21}(t-1) + \PP_{32/1}(t-1) + \PP_{221/1}(t-1) + \PP_{21}(t-1)\right) \\
&+ \left(\PP_{331/2}(t-1) + \PP_{31}(t-1) \right) +  \\
&+\left(\PP_{322/11}(t-1) + \PP_{211}(t-1)\right) + \PP_{321/1}(t-1) + \PP_{311}(t-1)\\
&=
\frac{1}{20}t^5+\frac{1}{2}t^4+\frac{23}{12}t^3+\frac{7}{2}t^2+\frac{91}{30}t+1
 + \left(\frac{2}{15}t^5+\frac{3}{4}t^4+\frac{3}{2}t^3+\frac{5}{4}t^2+\frac{11}{30}t\right)\\
&+2\left(\frac{11}{120}t^5+\frac{11}{24}t^4+\frac{19}{24}t^3+\frac{13}{24}t^2+\frac{7}{60}t\right)+\\
&+\left(\frac{2}{15}t^5+\frac{1}{3}t^4+\frac{1}{3}t^3+\frac{1}{6}t^2+\frac{1}{30}t\right)
+\left(\frac{1}{20}t^5+\frac{1}{4}t^4+\frac{5}{12}t^3+\frac{1}{4}t^2+\frac{1}{30}t\right)\\
&= \frac{11}{20}t^5 + \frac{11}{4}t^4 + \frac{23}{4}t^3 + \frac{25}{4}t^2 + \frac{37}{10}t + 1.
\end{align*}
\end{example}

\subsection{A remark towards generalizations}

As explained in the introduction of this paper, our positivity result does not cover the case of series-parallel matroids. However, a key tool we employed in the main proof of this article is that (connected) lattice path matroids admit nice subdivisions into snake matroids, a special class of series-parallel matroids. All positroids can be subdivided into series-parallel matroids (see for example Speyer and Williams \cite{speyer-williams}). It is reasonable to inquire whether the existence of series-parallel subdivisions is enough to ensure Ehrhart positivity. Not only would this prove Conjecture~\ref{conj:positroids}, but it would also prove that transversal matroids (and therefore also cotransversal matroids) are Ehrhart positive; note that transversal matroids and can be subdivided into series-parallel matroids thanks to results by Fink and Rinc\'on \cite{fink-rincon}. Motivated by a similar conjecture formulated in the context of Speyer's $g$-polynomial in \cite[Conjecture~4.6]{luis_delannoy}, we are led to propose the following conjecture.

\begin{conjecture}
    Let $\M$ be a connected matroid whose base polytope can be subdivided into series-parallel matroids. Then, $\M$ is Ehrhart positive.
\end{conjecture}

As said above, this is strictly stronger than Conjecture~\ref{conj:positroids}. The class of matroids that can be subdivided into (direct sums of) series-parallel matroids is easily seen to be closed under minors and duality, so this would actually imply that all gammoids are Ehrhart positive (gammoids are the smallest class of matroids closed under minors and duality and containing transversal matroids). We note that there are non-gammoids that admit a series-parallel subdivision: in a private communication with the authors David Speyer described one such subdivision for the the V\'amos matroid, which is certainly not a gammoid as it is not realizable.

\bibliographystyle{amsalpha}
\bibliography{bibliography_LPMs}

\end{document}